\newtheorem{theorem}{Theorem}[section]
\newtheorem{corollary}[theorem]{Corollary}
\newtheorem{definition}[theorem]{Definition}
\newtheorem{example}[theorem]{Example}
\newtheorem{lemma}[theorem]{Lemma}
\newtheorem{proposition}[theorem]{Proposition}
\newtheorem{remark}[theorem]{Remark}
\newenvironment{proof}[1][Proof]{\noindent\textbf{#1.} }{\ \rule{0.5em}{0.5em}}
\begin{document}

\title{A Fuglede type theorem for Fourier multiplier operators}
\author{Ben de Pagter and Werner J. Ricker\bigskip \and \textit{Dedicated in
admiration to Jaap Korevaar } \and \textit{on the occasion of his 100-th
birthday}}
\date{}
\maketitle

\begin{abstract}
Let $E$ be a translation invariant Banach function space over an infinite
compact abelian group $G$ and $M_{\varphi }$ be a Fourier multiplier
operator (with symbol $\varphi $) acting on E. It is assumed that $E$ has
order continuous norm and that $E$ is reflection invariant (which ensures
that $\bar{\varphi}$ is also a multiplier symbol for $E$). The following
Fuglede type theorem is established. Whenever $T$ is a bounded linear
operator on $E$ satisfying $M_{\varphi }T=TM_{\varphi }$, then also $M_{\bar{%
\varphi}}T=TM_{\bar{\varphi}}$.
\end{abstract}

\renewcommand{\thefootnote}{}\footnotetext{\textit{Math. Subject
Classification (2020)}: Primary 46E30, 43A22; Secondary 47B38}

\renewcommand{\thefootnote}{}\footnotetext{\textit{Key words and phrases}:
Compact abelian group, translation invariant Banach function space, Fourier
multiplier operator, reflection invariance.}

\section{Introduction}

A classical result of B. Fuglede states if $A$ is a bounded normal operator
on a Hilbert space $H$ and $T$ is a bounded linear operator on $H$
satisfying $TA=AT$, then $TA^{\ast }=A^{\ast }T$, \cite{F}. More generally,
C.R. Putnam showed if $A$ and $B$ are bounded normal operators on $H$ such
that $AT=TB$, then $A^{\ast }T=TB^{\ast }$, \cite{P}. This more general
version actually follows from the original Fuglede theorem, \cite{B}.
Various different proofs of Fuglede's theorem are known; see, for example, 
\cite{DS}, p. 934, \cite{Ha1}, \cite{RR}, \cite{R}.

An abstraction of the Fuglede theorem could be as follows. Let $X$ be a
Banach space and $A$ belong to a subalgebra of the bounded linear operators
on $X$ which has an involution $\sharp $. Does it follows that $TA^{\sharp
}=A^{\sharp }T$ whenever $T$ is a bounded linear operator on $X$ satisfying $%
TA=AT$? For $X=L^{p}\left( \mathbb{T}\right) $, with $1\leq p<\infty $ and $%
\mathbb{T}$ the circle group, let $\mathcal{M}_{p}\left( \mathbb{T}\right) $
denote the algebra of all bounded functions $\varphi :\mathbb{Z}\rightarrow 
\mathbb{C}$ having the property that, for every $f\in L^{p}\left( \mathbb{T}%
\right) $ there exists $g\in L^{p}\left( \mathbb{T}\right) $, necessarily
unique, such that its Fourier transform satisfies $\hat{g}=\varphi \hat{f}$
on $\mathbb{Z}$. Denoting $g$ by $M_{\varphi }^{\left( p\right) }f$, the
bounded linear operator $M_{\varphi }^{\left( p\right) }$ so generated on $%
L^{p}\left( \mathbb{T}\right) $ is called the $p$-multiplier operator
corresponding to $\varphi $. It is well known that the complex conjugate $%
\bar{\varphi}$ of $\varphi $ belongs to $\mathcal{M}_{p}\left( \mathbb{T}%
\right) $ whenever $\varphi \in \mathcal{M}_{p}\left( \mathbb{T}\right) $.
Clearly $M_{\varphi }^{\left( p\right) }\longmapsto M_{\bar{\varphi}%
}^{\left( p\right) }$ is an involution on the algebra of all $p$-multiplier
operators on $L^{p}\left( \mathbb{T}\right) $. In this setting the "Fuglede
theorem" is indeed valid, \cite{MR}.

The aim of this note is to give a far reaching generalization of the above
result beyond the setting of $L^{p}$-spaces. The group $\mathbb{T}$ can be
replaced with any infinite compact abelian group $G$ and the $L^{p}$-spaces
can be replaced by the significantly larger class of \textit{translation
invariant Banach function spaces} $E$ over $G$. However, for this class of
spaces $E$ three phenomena arise which do not occur for $L^{p}$-spaces over $%
G$. First, the space $E$ need \textit{not} contain $L^{\infty }\left(
G\right) $. Second, the reflection of a function in $E$ need not belong to $%
E $ (which has the effect that $\varphi \in \mathcal{M}_{E}\left( G\right) $
need not imply that $\bar{\varphi}$ belongs to $\mathcal{M}_{E}\left(
G\right) $) and third, the space of all continuous functions $C\left(
G\right) $ on $G$, even if $C\left( G\right) \subseteq E$, need not be dense
in $E$. However, for the very large and important class of spaces $E$ which 
\textit{do} have all three of these properties it is established in Theorem %
\ref{Thm01} that the "Fuglede theorem" does hold.

Since the methods used rely heavily of the fact that the spaces $E$ are all 
\textit{Banach function spaces} over $G$ (in particular, Banach lattices)
and that certain non-trivial properties of $E$ arising from translation
invariance have to be developed and established, we need to include some
sections to address these features. An attempt has been made to keep the
note as self-contained as possible.

\section{Preliminaries}

In this section we collect together some definitions and facts concerning
Banach function spaces. Let $\left( \Omega ,\Sigma ,\mu \right) $ be a
measure space. Since we are interested in the case where $\Omega =G$ is an
infinite compact abelian group with normalized Haar measure $\mu $, we will
assume that $\mu \left( \Omega \right) =1$ and that $\mu $ is atomless. We
denote by $L^{0}\left( \mu \right) $ the space of all $\mathbb{C}$-valued, $%
\Sigma $-measurable functions (with the usual identification of functions
which are equal $\mu $-a.e. on $\Omega $). Then $L^{0}\left( \mu \right) $
is a Dedekind complete complex Riesz space (or, vector lattice). Recall that
a subset $A\subseteq L^{0}\left( \mu \right) $ is called an (order) \textit{%
ideal} whenever $A$ is a linear subspace of $L^{0}\left( \mu \right) $ with
the property that $\left\vert f\right\vert \leq \left\vert g\right\vert $,
with $f\in L^{0}\left( \mu \right) $ and $g\in A$, implies that $f\in A$.

\begin{definition}
A \emph{Banach function space} (briefly B.f.s.) over $\left( \Omega ,\Sigma
,\mu \right) $ is a Banach space $\left( E,\left\Vert \cdot \right\Vert
_{E}\right) $, where $E\subseteq L^{0}\left( \mu \right) $ is an ideal and $%
\left\Vert \cdot \right\Vert _{E}$ is an absolutely monotone norm on $E$
(that is, $\left\Vert f\right\Vert _{E}\leq \left\Vert g\right\Vert _{E}$
whenever $\left\vert f\right\vert \leq \left\vert g\right\vert $ in $E$).
\end{definition}

Evidently, B.f.s.' are complex (Dedekind complete) Banach lattices.
Therefore, we can freely apply the general theory of Banach lattices to
B.f.s.' For the theory of Banach lattices we refer the reader to \cite{AB2}, 
\cite{MN}, \cite{Sch}, \cite{Z}, and for the theory of B.f.s.' to \cite{Za1}
(Chapter 15), \cite{BS}. The class of B.f.s.' includes the $L^{p}$-spaces ($%
1\leq p\leq \infty $), Orlicz spaces, Lorentz spaces, Marcinkiewicz spaces
and many more. If $E$ and $F$ are two B.f.s.' over $\left( \Omega ,\Sigma
,\mu \right) $ satisfying $E\subseteq F$, then the natural embedding of $E$
into $F$ is continuous. Indeed, the embedding operator is linear and
positive, and a positive linear operator on a Banach lattice is
automatically continuous (see e.g. Theorem 83.12 in \cite{Z}).

\begin{remark}
In Chapter 15 of \cite{Za1} and in the series of papers \cite{LZ}, B.f.s.'
are introduced via function norms. We briefly recall this approach. Denote
by $M\left( \mu \right) $ the set of all (equivalence classes of) extended $%
\mathbb{C}$-valued measurable functions on $\Omega $ (that is, every $f\in
M\left( \mu \right) $ is of the form $f=g+ih$ with $g,h:\Omega \rightarrow %
\left[ -\infty ,\infty \right] $ measurable). The set of all $\left[
0,\infty \right] $-valued functions in $M\left( \mu \right) $ is denoted by $%
M\left( \mu \right) ^{+}$. A \emph{function norm} $\rho $ is defined to be a
map $\rho :M\left( \mu \right) ^{+}\rightarrow \left[ 0,\infty \right] $
satisfying:

\begin{enumerate}
\item[(i)] If $u\in M\left( \mu \right) ^{+}$ and $\rho \left( u\right) =0$,
then $u=0$;

\item[(ii)] $\rho \left( \lambda u\right) =\lambda \rho \left( u\right) $
for all $\lambda \in \mathbb{R}^{+}$ and $u\in M\left( \mu \right) ^{+}$;

\item[(iii)] $\rho \left( u+v\right) \leq \rho \left( u\right) +\rho \left(
v\right) $ for all $u,v\in M\left( \mu \right) ^{+}$;

\item[(iv)] $\rho \left( u\right) \leq \rho \left( v\right) $ whenever $%
u\leq v$ in $M\left( \mu \right) ^{+}$.
\end{enumerate}

\noindent If $u\in M\left( u\right) ^{+}$ and $\rho \left( u\right) <\infty $%
, then $u\left( x\right) <\infty $ for $\mu $-a.e. $x\in \Omega $ (i.e., $%
u\in L^{0}\left( \mu \right) ^{+}$); see \cite{Za1}, Section 63, Theorem 1.
Define 
\begin{equation}
E_{\rho }=\left\{ f\in M\left( \mu \right) :\rho \left( \left\vert
f\right\vert \right) <\infty \right\} .  \label{eq0109}
\end{equation}%
Then $E_{\rho }$ is an ideal in $L^{0}\left( \mu \right) $. Setting $%
\left\Vert f\right\Vert _{E_{\rho }}=\rho \left( \left\vert f\right\vert
\right) $ for all $f\in E_{\rho }$, the space $\left( E_{\rho },\left\Vert
\cdot \right\Vert _{E_{\rho }}\right) $ is a normed space. If $E_{\rho }$ is
complete, then it is a B.f.s.

Conversely, if $\left( E,\left\Vert \cdot \right\Vert _{E}\right) $ is a
B.f.s. and we set 
\begin{equation*}
\rho \left( u\right) =\left\{ 
\begin{array}{ccc}
\left\Vert u\right\Vert _{E} & \text{if} & u\in E^{+} \\ 
\infty & \text{if} & u\notin E^{+}%
\end{array}%
\right. ,\ \ \ u\in M\left( \mu \right) ^{+},
\end{equation*}%
then $\rho $ is a function norm and the space $E$ is recovered via (\ref%
{eq0109}). This shows that the approach in \cite{Za1} is equivalent to ours
and so, all results from \cite{Za1} may be used in our setting (see also 
\cite{Z}, Section 112).
\end{remark}

Given a B.f.s. $\left( E,\left\Vert \cdot \right\Vert _{E}\right) $ over $%
\left( \Omega ,\Sigma ,\mu \right) $, there exists a set $A_{0}\in \Sigma $
with the property that $f=0$ $\mu $-a.e. on $\Omega \backslash A_{0}$ for
each $f\in E$ and, for every $A\in \Sigma $ satisfying $A\subseteq A_{0}$
and $\mu \left( A\right) >0$, there exists $B\in \Sigma $ such that $%
B\subseteq A$, $\mu \left( B\right) >0$ and $\chi _{B}\in E$ (see see \cite%
{Za1}, Section 72 or \cite{Z}, Section 86). The set $A_{0}$ is uniquely
determined (modulo $\mu $-null sets) and is called the \textit{carrier} of $%
E $. Moreover, there exists a sequence $\left( A_{n}\right) _{n=1}^{\infty }$
in $\Sigma $ such that $A_{n}\subseteq A_{0}$ and $\chi _{A_{n}}\in E$ for
all $n$, and $A_{n}\uparrow _{n}A_{0}$. Any B.f.s. that we encounter in this
note has its carrier equal to $\Omega $. In general, if $E\neq \left\{
0\right\} $, then one can replace, without loss of generality, $\Omega $ by $%
A_{0}$.

Recall that a B.f.s. $\left( E,\left\Vert \cdot \right\Vert _{E}\right) $
over $\left( \Omega ,\Sigma ,\mu \right) $ has \textit{order continuous norm}
(briefly o.c.-norm) if $\left\Vert f_{\tau }\right\Vert _{E}\downarrow
_{\tau }0$ whenever $f_{\tau }\downarrow _{\tau }0$ in $E$ (that is, $\left(
f_{\tau }\right) $ is a downwards directed net in the positive cone $E^{+}$
with infimum $0$). Since $E$ is Dedekind complete, it follows from Theorem
103.6 in \cite{Z} that $E$ has o.c.-norm if and only if $E$ has $\sigma $%
-o.c.-norm (that is, $\left\Vert f_{n}\right\Vert _{E}\downarrow _{n}0$ for
any sequence $\left( f_{n}\right) _{n=1}^{\infty }$ in $E^{+}$ satisfying $%
f_{n}\downarrow _{n}0$). For a sequence $\left( f_{n}\right) _{n=1}^{\infty
} $ in $E^{+}$ the condition $f_{n}\downarrow _{n}0$ is equivalent with $%
f_{n}\left( x\right) \downarrow 0$ for $\mu $-a.e. $x\in \Omega $. By way of
example, for $1\leq p<\infty $ the space $E=L^{p}\left( \mu \right) $ has
o.c.-norm but, $L^{\infty }\left( \mu \right) $ does not have o.c.-norm.

Let $\left( E,\left\Vert \cdot \right\Vert _{E}\right) $ be a B.f.s. over $%
\left( \Omega ,\Sigma ,\mu \right) $ with carrier $\Omega $. The \textit{K%
\"{o}the dual} (or \textit{associate space}) $E^{\times }$ of $E$ is defined
by 
\begin{equation*}
E^{\times }=\left\{ g\in L^{0}\left( \mu \right) :\left\Vert g\right\Vert
_{E^{\times }}<\infty \right\} ,
\end{equation*}%
where 
\begin{equation}
\left\Vert g\right\Vert _{E^{\times }}=\sup \left\{ \int_{\Omega }\left\vert
fg\right\vert d\mu :f\in E,\left\Vert f\right\Vert _{E}\leq 1\right\} ,\ \ \
g\in L^{0}\left( \mu \right) .  \label{eq01}
\end{equation}%
Then $\left( E^{\times },\left\Vert \cdot \right\Vert _{E^{\times }}\right) $
is a B.f.s. over $\left( \Omega ,\Sigma ,\mu \right) $ with carrier equal to 
$\Omega $; see \cite{Za1}, Sections 68, 69 and also \cite{Z}, p. 418. From
the definition it is also clear that \textit{H\"{o}lder's inequality} holds,
that is, 
\begin{equation}
\left\vert \int_{\Omega }fg\;d\mu \right\vert \leq \int_{\Omega }\left\vert
fg\right\vert d\mu \leq \left\Vert f\right\Vert _{E}\left\Vert g\right\Vert
_{E^{\times }},\ \ \ f\in E,\ \ g\in E^{\times }.  \label{eq02}
\end{equation}%
According to \cite{Za1}, Section 69, Theorem 1, we also have 
\begin{equation}
\left\Vert g\right\Vert _{E^{\times }}=\sup \left\{ \left\vert \int_{\Omega
}fg\;d\mu \right\vert :f\in E,\left\Vert f\right\Vert _{E}\leq 1\right\} ,\
\ \ g\in E^{\times }.  \label{eq06}
\end{equation}

For each $g\in E^{\times }$, define the linear functional $\varphi
_{g}:E\rightarrow \mathbb{C}$ by setting 
\begin{equation*}
\varphi _{g}\left( f\right) =\int_{\Omega }fg\;d\mu ,\ \ \ f\in E.
\end{equation*}%
It is easy to see that $\varphi _{g}\in E^{\ast }$ (the dual Banach space of 
$E$) and that $\left\Vert \varphi _{g}\right\Vert _{E^{\ast }}=\left\Vert
g\right\Vert _{E^{\times }}$ for all $g\in E^{\times }$ (see (\ref{eq06})).
Consequently, the map $g\longmapsto \varphi _{g}$, for $g\in E^{\times }$,
is an isometric isomorphism from $E^{\times }$ into $E^{\ast }$. Via this
map, $E^{\times }$ may be identified with a closed subspace of $E^{\ast }$
and we frequently denote the \textit{dual pairing} of $E$ and $E^{\times }$
by $\left\langle \cdot ,\cdot \right\rangle $, that is, 
\begin{equation}
\left\langle f,g\right\rangle =\int_{\Omega }fg\;d\mu ,\ \ \ f\in E,\ \ g\in
E^{\times }.  \label{eq0401}
\end{equation}%
Since the carrier of $E^{\times }$ is equal to $\Omega $ (whenever the
carrier of $E$ is $\Omega $), it follows that $E^{\times }$ separates the
points of $E$ (but, in general, $E^{\times }$ is not norming).

The functionals in $E^{\ast }$ which are of the form $\varphi _{g}$ for some 
$g\in E^{\times }$ may be identified with the band $E_{n}^{\ast }$ in $%
E^{\ast }$ of all order continuous functionals on $E$; see \cite{Za1},
Section 69, Theorem 3, and also \cite{Z}, Section 112. In particular, $%
E^{\ast }=E^{\times }$ if and only if $E$ has o.c.-norm (cf. Theorem 2.4.2
in \cite{MN}).

A B.f.s. $E$ over $\left( \Omega ,\Sigma ,\mu \right) $ has the \textit{%
Fatou property} if, for any net $0\leq u_{\alpha }\uparrow _{\alpha }$ in $E$
satisfying $\sup_{\alpha }\left\Vert u_{\alpha }\right\Vert _{E}<\infty $,
there exists $u\in E$ such that $0\leq u_{\alpha }\uparrow _{\alpha }u$ and $%
\left\Vert u_{\alpha }\right\Vert _{E}\uparrow _{\alpha }\left\Vert
u\right\Vert _{E}$. In this case, sequences also suffice; cf. \cite{Z},
Theorem 113.2. By a theorem of Halperin and Luxemburg (see \cite{Za1},
Section 71, Theorem 1), a B.f.s. $E$ has the Fatou property if and only if $%
E=E^{\times \times }$ isometrically. We point out that in \cite{BS} the
Fatou property is included in the definition of a B.f.s.

\section{Translation invariant Banach function \protect\linebreak spaces}

Let $G$ be an infinite compact abelian group and denote by $\mu $ normalized
Haar measure on the Borel $\sigma $-algebra $\mathcal{B}\left( G\right) $.
The space of all regular, complex Borel measures in $G$ is denoted by $%
M\left( G\right) $. The group operation is denoted by $+$ and the identity
element of $G$ is denoted by $0$. Moreover, $\mu \left( -A\right) =\mu
\left( A\right) $ for all $A\in \mathcal{B}\left( G\right) $. For the
general theory of locally compact abelian groups we refer the reader, for
instance, to \cite{La}, \cite{Ru1}. We denote $L^{0}\left( \mu \right) $ by
the more traditional notation $L^{0}\left( G\right) $ and the corresponding $%
L^{p}$-spaces by $L^{p}\left( G\right) $, $1\leq p\leq \infty $. A B.f.s.
over $\left( G,\mathcal{B}\left( G\right) ,\mu \right) $ is simply called a
B.f.s. over $G$. For each $y\in G$, define the \textit{translation operator} 
$\tau _{y}:L^{0}\left( G\right) \rightarrow L^{0}\left( G\right) $ by setting%
\begin{equation*}
\left( \tau _{y}f\right) \left( x\right) =f\left( x-y\right) ,\ \ \ x\in G,
\end{equation*}%
for all $f\in L^{0}\left( G\right) $.

\begin{definition}
\label{Def1101}A \emph{translation invariant B.f.s} over $G$ is a B.f.s. $%
E\subseteq L^{0}\left( G\right) $ such that $\tau _{y}f\in E$ with $%
\left\Vert \tau _{y}f\right\Vert _{E}=\left\Vert f\right\Vert _{E}$ for all $%
y\in G$, whenever $f\in E$.
\end{definition}

Such B.f.s.' include all spaces $L^{p}\left( G\right) $, $1\leq p\leq \infty 
$, all Orlicz spaces, Lorentz spaces and Marcinkiewicz spaces over $G$.
Actually, any rearrangement invariant B.f.s. over $G$ is translation
invariant. However, there exist plenty of translation invariant B.f.s.'
which are \textit{not} rearrangement invariant. We present an example (see
also Example \ref{Ex01} below).

\begin{example}
\label{Ex02}Let $G_{1}$ and $G_{2}$ be two infinite compact abelian groups
(with normalized Haar measure $\mu _{1}$ and $\mu _{2}$, respectively) and
consider the product group $G=G_{1}\times G_{2}$ (with normalized Haar
measure $\mu =\mu _{1}\times \mu _{2}$). Let $1<p,q<\infty $. Define the
function norm $\left\Vert \cdot \right\Vert _{p\times q}$ on $L^{0}\left(
G\right) $ by 
\begin{equation*}
\left\Vert f\right\Vert _{p\times q}=\left( \int_{G_{1}}\left(
\int_{G_{2}}\left\vert f\left( x,y\right) \right\vert ^{q}d\mu _{2}\left(
y\right) \right) ^{p/q}d\mu _{1}\left( x\right) \right) ^{1/p},\ \ \ f\in
L^{0}\left( G\right) ,
\end{equation*}%
and let 
\begin{equation*}
E_{p\times q}=\left\{ f\in L^{0}\left( G\right) :\left\Vert f\right\Vert
_{p\times q}<\infty \right\} .
\end{equation*}%
Equipped with the norm $\left\Vert \cdot \right\Vert _{p\times q}$ the space 
$E_{p\times q}$ is a translation invariant B.f.s. over $G$ with the Fatou
property and o.c.-norm. It is readily verified that $E_{p\times q}$ is \emph{%
not} rearrangement invariant whenever $p\neq q$.
\end{example}

The following observation may be intuitively clear.

\begin{lemma}
\label{Lem1104}Let $E\neq \left\{ 0\right\} $ be a translation invariant
B.f.s. over $G$. Then the carrier of $E$ is $G$.
\end{lemma}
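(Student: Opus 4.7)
\medskip

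\noindent\textbf{Proof proposal.} The plan is to exploit translation invariance of $E$ together with translation invariance of Haar measure $\mu$ in order to ``spread'' the carrier $A_{0}$ over all of $G$.

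First I would fix a sequence $(A_{n})_{n=1}^{\infty }\subseteq \mathcal{B}(G)$ with $A_{n}\subseteq A_{0}$, $\chi _{A_{n}}\in E$ and $A_{n}\uparrow A_{0}$ (such a sequence exists by the results quoted from \cite{Za1}, Section 72). Since $E\neq \{0\}$ and every $f\in E$ vanishes off $A_{0}$, necessarily $\mu (A_{0})>0$, and so $\mu (A_{n_{0}})>0$ for some $n_{0}$. For notational convenience set $B=A_{n_{0}}$, so that $\chi _{B}\in E$, $B\subseteq A_{0}$, and $\mu (B)>0$.

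Next, for every $y\in G$, translation invariance of $E$ gives $\tau _{y}\chi _{B}\in E$. A direct computation from the definition $(\tau _{y}f)(x)=f(x-y)$ shows that $\tau _{y}\chi _{B}=\chi _{B+y}$. Since every element of $E$ vanishes $\mu $-a.e.~outside $A_{0}$, this yields
\begin{equation*}
\mu \bigl((B+y)\setminus A_{0}\bigr)=0,\qquad y\in G.
\end{equation*}
The main (small) obstacle here is that this is an uncountable family of null-set conditions, and one cannot simply take the union of the exceptional sets; some integration argument is needed to pass to a single null set on $G$.

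To handle this, I would integrate the previous identity over $y\in G$ and apply Fubini's theorem together with translation invariance of $\mu $:
\begin{equation*}
0=\int_{G}\mu \bigl((B+y)\cap (G\setminus A_{0})\bigr)\,d\mu (y)=\int_{G\setminus A_{0}}\int_{G}\chi _{B}(x-y)\,d\mu (y)\,d\mu (x)=\mu (B)\,\mu (G\setminus A_{0}).
\end{equation*}
Since $\mu (B)>0$, we conclude $\mu (G\setminus A_{0})=0$, i.e., $A_{0}=G$ modulo $\mu $-null sets, which is the required conclusion. This completes the strategy; no further properties of $E$ beyond translation invariance and the existence of the carrier are needed.
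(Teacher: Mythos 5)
Your proof is correct. It reaches the same destination as the paper's but by a slightly different mechanism: the paper verifies the defining property of the carrier directly, by taking an arbitrary $B$ of positive measure, producing a set $A$ with $\chi_{A}\in E$, and invoking Halmos's Theorem F (Section 59 of \emph{Measure Theory}) to find a \emph{single} translate $y_{0}$ with $\mu\left(\left(A+y_{0}\right)\cap B\right)>0$, whence $\chi_{\left(A+y_{0}\right)\cap B}\leq\tau_{y_{0}}\chi_{A}\in E$. You instead work with the carrier $A_{0}$ globally and \emph{average over all translates}: since $\chi_{B+y}\in E$ forces $\mu\left(\left(B+y\right)\setminus A_{0}\right)=0$ for every $y$, a Fubini--Tonelli computation (using $\mu\left(-A\right)=\mu\left(A\right)$ and translation invariance of $\mu$) yields $\mu\left(B\right)\mu\left(G\setminus A_{0}\right)=0$ and hence $A_{0}=G$ a.e. The two routes are close in spirit --- both exploit that translates of a positive-measure set must meet any other positive-measure set --- but yours is self-contained (it reproves the needed instance of Halmos's theorem via integration rather than citing it), while the paper's is shorter given the reference and, by producing an explicit subset $C\subseteq B$ with $\chi_{C}\in E$, exhibits the carrier property constructively. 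You correctly identify and handle the one genuine subtlety in your route, namely that the uncountably many null conditions must be merged by integration rather than by a union.
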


\begin{proof}
Let $B\in \mathcal{B}\left( G\right) $ with $\mu \left( B\right) >0$ be
given. We have to show that there exists $C\in \mathcal{B}\left( G\right) $
satisfying $\mu \left( C\right) >0$, $C\subseteq B$ and $\chi _{C}\in E$. By
assumption there is $0<f\in E$. Hence, there exist $\varepsilon >0$ and $%
A\in \mathcal{B}\left( G\right) $ satisfying $\mu \left( A\right) >0$ and $%
0<\varepsilon \chi _{A}\leq f$. In particular, $\chi _{A}\in E$. It follows
from Theorem F in Section 59 of \cite{Ha} that there exists $y_{0}\in G$
such that $\left( \tau _{y_{0}}\chi _{A}\right) \chi _{B}>0$. Setting $%
C=\left( A+y_{0}\right) \cap B$, it follows that $\mu \left( C\right) >0$
and $\chi _{C}\leq \tau _{y_{0}}\chi _{A}\in E$, which implies that $\chi
_{C}\in E$. Hence, the set $C$ satisfies the requirements and the proof is
complete.\medskip
\end{proof}

Using the definition (see (\ref{eq01})), it is routine to verify that the K%
\"{o}the dual $E^{\times }$ is translation invariant whenever $E\neq \left\{
0\right\} $ is a translation invariant B.f.s. over $G$. If $E\neq \left\{
0\right\} $ is any rearrangement invariant B.f.s. over $G$, then $L^{\infty
}\left( G\right) \subseteq E\subseteq L^{1}\left( G\right) $ (as $\mu $ is
atomless with $\mu \left( G\right) <\infty $). The following result shows,
for any translation invariant B.f.s. $E$ over $G$, that the inclusion $%
E\subseteq L^{1}\left( G\right) $ always holds. Even if $E\neq \left\{
0\right\} $, the other inclusion $L^{\infty }\left( G\right) \subseteq E$
may fail, in general (see Example \ref{Ex1103} below).

\begin{proposition}
\label{Prop1107}Let $E$ be any translation invariant B.f.s. over $G$. Then $%
E\subseteq L^{1}\left( G\right) $ with a continuous embedding.
\end{proposition}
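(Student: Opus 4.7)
The plan is to reduce the embedding $E\hookrightarrow L^{1}(G)$ to the existence of a non-negative element of $E^{\times}$ that lies in $L^{1}(G)$, and then exploit translation invariance by a Fubini-type averaging.

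First I would dispose of the trivial case $E=\{0\}$. Otherwise, by Lemma \ref{Lem1104} the carrier of $E$ equals $G$, and consequently (by the discussion following equation (\ref{eq01})) the carrier of $E^{\times}$ is also $G$; in particular $E^{\times}\neq\{0\}$. Since $E^{\times}$ is translation invariant (as noted in the paragraph preceding the proposition), Lemma \ref{Lem1104} applied to $E^{\times}$ produces a Borel set $A\subseteq G$ with $\mu(A)>0$ and $g:=\chi_{A}\in E^{\times}$. Because $G$ has total Haar mass $1$, $g$ is bounded and hence $g\in L^{1}(G)$ with $\|g\|_{L^{1}}=\mu(A)>0$, while $\|g\|_{E^{\times}}<\infty$.

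Second, for any $f\in E$ with $f\geq 0$ and any $y\in G$, H\"{o}lder's inequality (\ref{eq02}) together with translation invariance of $E^{\times}$ gives
\begin{equation*}
\int_{G}f(x)\,g(x-y)\,d\mu(x)=\langle f,\tau_{y}g\rangle\leq \|f\|_{E}\,\|\tau_{y}g\|_{E^{\times}}=\|f\|_{E}\,\|g\|_{E^{\times}}.
\end{equation*}
Integrating in $y$ over $G$ (with $\mu(G)=1$), the right-hand side is bounded by $\|f\|_{E}\|g\|_{E^{\times}}$. For the left-hand side, Tonelli applied to the non-negative measurable function $(x,y)\mapsto f(x)g(x-y)$ on $G\times G$, combined with translation invariance of Haar measure (so that $\int_{G}g(x-y)\,d\mu(y)=\|g\|_{L^{1}}$ independently of $x$), yields
\begin{equation*}
\int_{G}\!\!\int_{G}f(x)g(x-y)\,d\mu(x)\,d\mu(y)=\|g\|_{L^{1}}\,\|f\|_{L^{1}}.
\end{equation*}
Combining, $\|f\|_{L^{1}}\leq \bigl(\|g\|_{E^{\times}}/\|g\|_{L^{1}}\bigr)\|f\|_{E}$; for general $f\in E$ I would apply this to $|f|\in E^{+}$.

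The only subtle point, rather than a genuine obstacle, is that one might be tempted to prove $\mathbf{1}\in E^{\times}$ directly, e.g.\ by interpreting $\int_{G}\tau_{y}g\,d\mu(y)$ as an $E^{\times}$-valued Bochner integral; that would require continuity of $y\mapsto\tau_{y}g$ in $E^{\times}$, which is not available at this level of generality (and is in fact false for spaces like $L^{\infty}$). The Fubini-type averaging above performs exactly the same averaging \emph{inside} the scalar integral defining the duality pairing, so translation invariance alone (not strong continuity of translation) suffices.
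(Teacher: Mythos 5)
Your proof is correct and follows essentially the same route as the paper's: pick a non-negative, non-zero $g\in E^{\times}$, combine H\"older's inequality with translation invariance, and average over the group via Fubini--Tonelli to bound $\left\Vert f\right\Vert _{1}$ by a constant multiple of $\left\Vert f\right\Vert _{E}$. The only cosmetic differences are that you translate $g$ (using translation invariance of $E^{\times }$) and take $g$ to be a characteristic function so that $0<\left\Vert g\right\Vert _{1}<\infty $ is automatic, whereas the paper translates $f$ and works with an arbitrary positive $g\in E^{\times }$.
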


\begin{proof}
We can assume that $E\neq \left\{ 0\right\} $. Since then also $E^{\times
}\neq \left\{ 0\right\} $, there exists $0\leq g\in E^{\times }$ with $%
\left\Vert g\right\Vert _{E^{\times }}=1$. Given any $0\leq f\in E$, the
non-negative function $\left( x,y\right) \longmapsto f\left( x+y\right)
g\left( y\right) $, for $\left( x,y\right) \in G\times G$, is known to be
Borel measurable on $G\times G$ (\cite{Ru1}, p. 5). Since $\int_{G}fd\mu
=\int_{G}\tau _{-y}fd\mu $ for all $y\in G$ (the value $+\infty $ is
possible at this stage), we deduce from the Fubini-Tonelli theorem (for
positive functions) that 
\begin{multline*}
\int_{G}\left( \int_{G}f\left( x+y\right) g\left( y\right) d\mu \left(
y\right) \right) d\mu \left( x\right)  \\
=\int_{G}\left( \int_{G}f\left( x+y\right) d\mu \left( x\right) \right)
g\left( y\right) d\mu \left( y\right)  \\
=\left( \int_{G}fd\mu \right) \left( \int_{G}g\;d\mu \right) .
\end{multline*}%
But, for each $x\in G$, it follows from (\ref{eq02}) that 
\begin{equation*}
\int_{G}f\left( x+y\right) g\left( y\right) d\mu \left( y\right)
=\int_{G}\left( \tau _{-y}f\right) g\;d\mu \leq \left\Vert f\right\Vert
_{E}\left\Vert g\right\Vert _{E^{\times }}=\left\Vert f\right\Vert
_{E}<\infty 
\end{equation*}%
and so, by the previous identity, we have that 
\begin{equation}
\left( \int_{G}fd\mu \right) \left( \int_{G}g\;d\mu \right) \leq \mu \left(
G\right) \left\Vert f\right\Vert _{E}=\left\Vert f\right\Vert _{E}<\infty .
\label{eq1119}
\end{equation}%
Since $g>0$, its integral $\alpha =\int_{G}g\;d\mu >0$ and so $\left\Vert
f\right\Vert _{1}=\int_{G}fd\mu <\infty $, that is, $f\in L^{1}\left(
G\right) $. It follows that $E\subseteq L^{1}\left( G\right) $. As noted
earlier, the natural embedding $E\subseteq L^{1}\left( G\right) $ is
necessarily continuous. \medskip 
\end{proof}

\begin{corollary}
\label{Cor1102}Let $E\neq \left\{ 0\right\} $ be a translation invariant
B.f.s. over $G$.

\begin{enumerate}
\item[(i)] With continuous inclusions we have that $L^{\infty }\left(
G\right) \subseteq E^{\times }\subseteq L^{1}\left( G\right) $.

\item[(ii)] If, in addition, $E$ has the Fatou property, then $L^{\infty
}\left( G\right) \subseteq E\subseteq L^{1}\left( G\right) $, with
continuous inclusions.
\end{enumerate}
\end{corollary}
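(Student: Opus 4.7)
The strategy is to derive both parts from Proposition~\ref{Prop1107}, applied first to $E$ itself and then to its K\"othe dual $E^{\times }$.

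For (i), consider first the right-hand inclusion $E^{\times }\subseteq L^{1}(G)$. Since $E\neq \{0\}$ is translation invariant, Lemma~\ref{Lem1104} gives that its carrier is $G$, and by the general B.f.s.~fact recalled in Section~2 the carrier of $E^{\times }$ is then also $G$; in particular $E^{\times }\neq \{0\}$. Moreover, as noted just before Proposition~\ref{Prop1107}, $E^{\times }$ is itself a translation invariant B.f.s.~over $G$. Consequently, Proposition~\ref{Prop1107} applies to $E^{\times }$ and yields $E^{\times }\subseteq L^{1}(G)$ with continuous embedding.

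For the left-hand inclusion $L^{\infty }(G)\subseteq E^{\times }$, apply Proposition~\ref{Prop1107} to $E$: let $C>0$ be a constant with $\Vert f\Vert _{1}\leq C\Vert f\Vert _{E}$ for every $f\in E$. For any $g\in L^{\infty }(G)$ and any $f\in E$ with $\Vert f\Vert _{E}\leq 1$, one estimates directly
\begin{equation*}
\int_{G}\left\vert fg\right\vert \,d\mu \leq \Vert g\Vert _{\infty }\Vert f\Vert _{1}\leq C\Vert g\Vert _{\infty }.
\end{equation*}
Taking the supremum over such $f$ in the definition~\eqref{eq01} of $\Vert g\Vert _{E^{\times }}$ gives $\Vert g\Vert _{E^{\times }}\leq C\Vert g\Vert _{\infty }$, so that $L^{\infty }(G)\subseteq E^{\times }$ continuously.

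For (ii), the Halperin--Luxemburg theorem recalled in Section~2 says that the Fatou property is equivalent to $E=E^{\times \times }$ isometrically. Since $E^{\times }$ is a non-trivial translation invariant B.f.s.~over $G$ by the argument above, part~(i) applied with $E^{\times }$ in place of $E$ delivers
\begin{equation*}
L^{\infty }(G)\subseteq (E^{\times })^{\times }=E
\end{equation*}
with continuous inclusion, while the remaining inclusion $E\subseteq L^{1}(G)$ is exactly Proposition~\ref{Prop1107} once more. The argument is a double application of Proposition~\ref{Prop1107}, with no real obstacle; the only points requiring care are verifying that $E^{\times }$ is non-trivial and translation invariant, both of which are supplied by the preceding material.
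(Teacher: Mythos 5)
Your proof is correct and follows essentially the same route as the paper: a double application of Proposition~\ref{Prop1107}, once to $E$ and once to the (non-trivial, translation invariant) K\"othe dual $E^{\times }$, with part (ii) obtained from part (i) via $E=E^{\times \times }$. The only cosmetic difference is that for $L^{\infty }(G)\subseteq E^{\times }$ you unpack the estimate directly from the definition \eqref{eq01}, whereas the paper phrases the same computation as $L^{\infty }(G)=L^{1}(G)^{\times }\subseteq E^{\times }$ using the inclusion-reversing property of K\"othe dualization.
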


\begin{proof}
(i). Since $E^{\times }\neq \left\{ 0\right\} $ is also a translation
invariant B.f.s., we can apply Proposition \ref{Prop1107} to $E^{\times }$
to conclude that $E^{\times }\subseteq L^{1}\left( G\right) $ continuously.
Again by Proposition \ref{Prop1107}, now applied to $E$, we have $E\subseteq
L^{1}\left( G\right) $ continuously and hence, $L^{\infty }\left( G\right)
=L^{1}\left( G\right) ^{\times }\subseteq E^{\times }$, continuously.

(ii). Part (i), applied to $E^{\times }$, yields $L^{\infty }\left( G\right)
\subseteq E^{\times \times }\subseteq L^{1}\left( G\right) $ continuously.
If $E$ has the Fatou property, then $E=E^{\times \times }$, which completes
the proof.\medskip
\end{proof}

There exist non-trivial translation invariant B.f.s.' over $G$ for which $%
L^{\infty }\left( G\right) \nsubseteq E$.

\begin{example}
\label{Ex1103}This example is inspired by the results in \cite{Ru3}. Let $G$
be an infinite compact abelian group, equipped with normalized Haar measure $%
\mu $. Let $\mathcal{O}$ denote the collection of all open and dense subsets
of $G$. Note that $U_{1}\cap U_{2}\in \mathcal{O}$ whenever $U_{1},U_{2}\in 
\mathcal{O}$ and that $x+U\in \mathcal{O}$ whenever $U\in \mathcal{O}$ and $%
x\in G$. Define 
\begin{equation*}
J_{\mathcal{O}}=\left\{ f\in L^{\infty }\left( G\right) :\exists \ U\in 
\mathcal{O}\ \text{such that }f\chi _{U}=0\right\} .
\end{equation*}%
It is readily verified that $J_{\mathcal{O}}$ is an ideal in $L^{\infty
}\left( G\right) $, which is also translation invariant (that is, $\tau
_{x}f\in J_{\mathcal{O}}$ whenever $f\in J_{\mathcal{O}}$ and $x\in G$).
Furthermore, $J_{\mathcal{O}}\neq \left\{ 0\right\} $. Indeed, there exists $%
U\in \mathcal{O}$ with $\mu \left( U\right) <1$ (actually, for each $%
\varepsilon >0$ there exists $V\in \mathcal{O}$ such that $\mu \left(
V\right) <\varepsilon $; see Theorem 2.4 in \cite{Ru3}), in which case $%
0\neq \chi _{G\backslash U}\in J_{\mathcal{O}}$. Let $E_{\mathcal{O}}$
denote the norm closure of $J_{\mathcal{O}}$ in $L^{\infty }\left( G\right) $%
. Since the norm closure of any ideal is again an ideal (cf. Theorem 100.11
in \cite{Z}), it follows that $E_{\mathcal{O}}$ is an ideal in $L^{\infty
}\left( G\right) $ and it is easily verified that $E_{\mathcal{O}}$ is
translation invariant. Hence, $\left( E_{\mathcal{O}},\left\Vert \cdot
\right\Vert _{\infty }\right) $ is a non-zero translation invariant B.f.s.
over $G$.

The claim is that $\chi _{G}\notin E_{\mathcal{O}}$ (and hence, that $%
L^{\infty }\left( G\right) \nsubseteq E_{\mathcal{O}}$). Actually, $E_{%
\mathcal{O}}\cap C\left( G\right) =\left\{ 0\right\} $. Indeed, suppose that 
$0\neq f\in E_{\mathcal{O}}\cap C\left( G\right) $. Then there exist an $%
\varepsilon >0$ and a non-empty open set $W\subseteq G$ such that $%
\left\vert f\left( x\right) \right\vert >\varepsilon $, for $x\in W$. Choose 
$g\in J_{\mathcal{O}}$ such that $\left\Vert f-g\right\Vert _{\infty
}<\varepsilon /2$, in which case $\left\vert g\left( x\right) \right\vert
>\varepsilon /2$ for all $x\in W$. But, then $g$ cannot vanish $\mu $-a.e.
on any set from $\mathcal{O}$, contradicting that $g\in J_{\mathcal{O}}$.
\end{example}

Our next aim is to show that $L^{\infty }\left( G\right) \subseteq E$
whenever $E\neq \left\{ 0\right\} $ and $E$ has o.c.-norm (see Proposition %
\ref{Prop1108}). First we require some preliminary results.

\begin{lemma}
\label{Lem1105}Let $E\neq \left\{ 0\right\} $ be a translation invariant
B.f.s. over $G$ with o.c.-norm. For each set $A\in \Sigma $ with $\chi
_{A}\in E$ the following assertions hold.

\begin{enumerate}
\item[(i)] For each $\varepsilon >0$ there exists $\delta >0$ such that $%
\left\Vert \chi _{B}\right\Vert _{E}\leq \varepsilon $ whenever $B\in 
\mathcal{B}\left( G\right) $ satisfies $B\subseteq A$ and $\mu \left(
B\right) \leq \delta $.

\item[(ii)] If a sequence $\left( A_{n}\right) _{n=1}^{\infty }$ in $%
\mathcal{B}\left( G\right) $ satisfies $A_{n}\downarrow \emptyset $, then 
\begin{equation*}
\lim_{n\rightarrow \infty }\sup_{y\in G}\left\Vert \chi _{A_{n}}\left( \tau
_{y}\chi _{A}\right) \right\Vert _{E}=0.
\end{equation*}

\item[(iii)] For each $B\in \mathcal{B}\left( G\right) $ we have that $%
\lim_{y\rightarrow 0}\left\Vert \left( \chi _{B}-\tau _{y}\chi _{B}\right)
\chi _{A}\right\Vert _{E}=0$.

\item[(iv)] $\lim_{y\rightarrow 0}\left\Vert \chi _{A}-\tau _{y}\chi
_{A}\right\Vert _{E}=0$.
\end{enumerate}
\end{lemma}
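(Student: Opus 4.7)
The strategy is to establish (i) first---this is the engine---and then obtain parts (ii)--(iv) by combining (i) with translation invariance of $E$ and with the standard continuity of translation in $L^{1}(G)$ (which follows from density of $C(G)$ in $L^{1}(G)$ together with uniform continuity of continuous functions on the compact group $G$).

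For (i), I would argue by contradiction. If the conclusion fails, then for some fixed $\varepsilon>0$ one can choose Borel sets $B_{n}\subseteq A$ with $\mu(B_{n})\leq 2^{-n}$ and $\|\chi_{B_{n}}\|_{E}>\varepsilon$. Setting $D_{n}:=\bigcup_{k\geq n}B_{k}\subseteq A$, one has $D_{n}\downarrow$ and $\mu(\bigcap_{n}D_{n})=0$, so $\chi_{D_{n}}\downarrow 0$ $\mu$-a.e. Since $0\leq\chi_{D_{n}}\leq\chi_{A}\in E$, this is a decreasing sequence to $0$ in $E$. The o.c.-norm is equivalent to $\sigma$-o.c.-norm (Theorem 103.6 in \cite{Z}), so $\|\chi_{D_{n}}\|_{E}\downarrow 0$, contradicting $\varepsilon<\|\chi_{B_{n}}\|_{E}\leq\|\chi_{D_{n}}\|_{E}$.

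A direct consequence of (i) and the equality $\|\tau_{y}\chi_{A}\|_{E}=\|\chi_{A}\|_{E}$ is a uniform version: for every $\varepsilon>0$ there exists $\delta>0$ such that $\|\chi_{C}\|_{E}\leq\varepsilon$ whenever $C\subseteq A+y$ for some $y\in G$ with $\mu(C)\leq\delta$; indeed $C-y\subseteq A$ has the same measure and $\|\chi_{C}\|_{E}=\|\chi_{C-y}\|_{E}$ by translation invariance. Part (ii) then follows immediately, since $\chi_{A_{n}}\tau_{y}\chi_{A}=\chi_{A_{n}\cap(A+y)}$, the set $A_{n}\cap(A+y)$ lies in $A+y$, and $\mu(A_{n}\cap(A+y))\leq\mu(A_{n})\rightarrow 0$ uniformly in $y$ as $A_{n}\downarrow\emptyset$.

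For (iii), $|(\chi_{B}-\tau_{y}\chi_{B})\chi_{A}|=\chi_{A\cap(B\triangle(B+y))}$ is a subset of $A$, and by continuity of translation in $L^{1}(G)$ one has $\mu(B\triangle(B+y))=\|\chi_{B}-\tau_{y}\chi_{B}\|_{1}\rightarrow 0$ as $y\rightarrow 0$; then (i) yields the claim. Finally, for (iv) I split $|\chi_{A}-\tau_{y}\chi_{A}|=\chi_{A\setminus(A+y)}+\chi_{(A+y)\setminus A}$. The first summand equals $(\chi_{A}-\tau_{y}\chi_{A})\chi_{A}$, whose $E$-norm tends to $0$ by (iii) with $B=A$; for the second, translation invariance gives $\|\chi_{(A+y)\setminus A}\|_{E}=\|\chi_{A\setminus(A-y)}\|_{E}=\|(\chi_{A}-\tau_{-y}\chi_{A})\chi_{A}\|_{E}$, which again tends to $0$ by (iii) (with $-y$ in place of $y$). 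The main obstacle is the proof of (i), where the key is that although each individual $\chi_{B_{n}}$ has bad $E$-norm, replacing it by the decreasing envelope $D_{n}$ makes order continuity of the norm applicable; everything after (i) is careful bookkeeping via translation invariance.
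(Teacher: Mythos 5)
Your proofs of parts (i), (ii) and (iii) follow essentially the same route as the paper: the contradiction argument with the decreasing envelopes $D_{n}=\bigcup_{k\geq n}B_{k}$ for (i) is identical to the paper's $C_{n}$, part (ii) is obtained by translating $A_{n}\cap (A+y)$ back into $A$ (the paper writes this as $\left\Vert \chi _{A_{n}}\tau _{y}\chi _{A}\right\Vert _{E}=\left\Vert (\tau _{-y}\chi _{A_{n}})\chi _{A}\right\Vert _{E}$ with $(\tau _{-y}\chi _{A_{n}})\chi _{A}=\chi _{(A_{n}-y)\cap A}$, which is the same computation as your uniform version of (i)), and part (iii) is the same combination of continuity of translation in $L^{1}(G)$ with (i). Where you genuinely diverge is part (iv). The paper exhausts $G$ by sets $H_{n}\uparrow G$ with $\chi _{H_{n}}\in E$ (using that the carrier of $E$ is $G$, via Lemma \ref{Lem1104}) and splits $\chi _{A}-\tau _{y}\chi _{A}$ over $H_{N}$ and $G\backslash H_{N}$, invoking part (ii) for the tail. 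You instead write $\left\vert \chi _{A}-\tau _{y}\chi _{A}\right\vert =\chi _{A\backslash (A+y)}+\chi _{(A+y)\backslash A}$, observe that the first piece is controlled by (iii) with $B=A$, and handle the second piece --- which is \emph{not} a subset of $A$, so (i) does not apply to it directly --- by translating it by $-y$ into $A\backslash (A-y)$ and applying (iii) with $-y$; this is correct and is in fact a simpler, more self-contained argument: it avoids part (ii) and the carrier machinery entirely, at the cost of exploiting the specific indicator-function structure of $\chi _{A}-\tau _{y}\chi _{A}$ (the paper's localization argument is the one that survives when one later passes from indicators to general $f\in E$, but for the lemma as stated your route is perfectly adequate).
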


\begin{proof}
(i). If the statement does not hold, then there exist $\varepsilon >0$ and a
sequence $\left( B_{n}\right) _{n=1}^{\infty }$ in $\mathcal{B}\left(
G\right) $ with $B_{n}\subseteq A$, $\mu \left( B_{n}\right) \leq 2^{-n}$
and $\left\Vert \chi _{B_{n}}\right\Vert _{E}\geq \varepsilon $ for all $%
n\in \mathbb{N}$. The sets $C_{n}=\bigcup\nolimits_{k=n}^{\infty }B_{k}$
satisfy $\mu \left( C_{n}\right) \leq 2^{-n+1}$ for all $n\in \mathbb{N}$
and $C_{n}\downarrow _{n}$. Hence, $\chi _{A}\geq \chi _{C_{n}}\downarrow 0$
in $E$ and so, by order continuity of the norm, we have $\left\Vert \chi
_{C_{n}}\right\Vert _{E}\downarrow 0$. But, $\chi _{C_{n}}\geq \chi _{B_{n}}$
implies that $\left\Vert \chi _{C_{n}}\right\Vert _{E}\geq \left\Vert \chi
_{B_{n}}\right\Vert _{E}\geq \varepsilon $ for all $n$, which is a
contradiction.

(ii). Given $\varepsilon >0$, by part (i) there exists $\delta >0$ such that 
$\left\Vert \chi _{B}\right\Vert _{E}\leq \varepsilon $ whenever $B\in 
\mathcal{B}\left( G\right) $ satisfies $B\subseteq A$ and $\mu \left(
B\right) \leq \delta $. Since $A_{n}\downarrow \emptyset $, there exists $%
N\in \mathbb{N}$ such that $\mu \left( A_{n}\right) \leq \delta $ for all $%
n\geq N$. Observe that 
\begin{equation*}
\left\Vert \chi _{A_{n}}\left( \tau _{y}\chi _{A}\right) \right\Vert
_{E}=\left\Vert \tau _{-y}\left( \chi _{A_{n}}\tau _{y}\chi _{A}\right)
\right\Vert _{E}=\left\Vert \left( \tau _{-y}\chi _{A_{n}}\right) \chi
_{A}\right\Vert _{E},\ \ n\in \mathbb{N},\ \ \ y\in G.
\end{equation*}%
Since $\left( \tau _{-y}\chi _{A_{n}}\right) \chi _{A}=\chi _{\left(
A_{n}-y\right) \cap A}$ and 
\begin{equation*}
\mu \left( \left( A_{n}-y\right) \cap A\right) \leq \mu \left(
A_{n}-y\right) =\mu \left( A_{n}\right) \leq \delta ,\ \ \ n\geq N,
\end{equation*}%
it follows that $\left\Vert \left( \tau _{-y}\chi _{A_{n}}\right) \chi
_{A}\right\Vert _{E}\leq \varepsilon $ for all $y\in G$ and $n\geq N$.
Hence, 
\begin{equation*}
\sup_{y\in G}\left\Vert \left( \tau _{-y}\chi _{A_{n}}\right) \chi
_{A}\right\Vert _{E}\leq \varepsilon ,\ \ \ n\geq N,
\end{equation*}%
which completes the proof of part (ii).

(iii). Fix $B\in \mathcal{B}\left( G\right) $. Note that $\left\vert \chi
_{B}-\tau _{y}\chi _{B}\right\vert =\chi _{\left( B+y\right) \Delta B}$, for 
$y\in G$, where $\Delta $ denotes the symmetric difference. It follows from
the continuity of translations in $L^{1}\left( G\right) $ (see \cite{Ru1},
p.3) that $\lim_{y\rightarrow 0}\left\Vert \chi _{B}-\tau _{y}\chi
_{B}\right\Vert _{1}=0$ and hence, that $\lim_{y\rightarrow 0}\mu \left(
\left( B+y\right) \Delta B\right) =0$. Given $\varepsilon >0$, it follows
from part (i) that there exists $\delta >0$ such that $\left\Vert \chi
_{C}\right\Vert _{E}\leq \varepsilon $ whenever $C\in \mathcal{B}\left(
G\right) $ satisfies $C\subseteq A$ and $\mu \left( C\right) \leq \delta $.
If $U$ is a neighbourhood of $0\in G$ such that $\mu \left( \left(
B+y\right) \Delta B\right) \leq \delta $ for all $y\in U$, then $\mu \left(
A\cap \left( \left( B+y\right) \Delta B\right) \right) \leq \delta $ and so, 
\begin{equation*}
\left\Vert \left( \chi _{B}-\tau _{y}\chi _{B}\right) \chi _{A}\right\Vert
_{E}=\left\Vert \chi _{A\cap \left( \left( B+y\right) \Delta B\right)
}\right\Vert _{E}\leq \varepsilon ,\ \ \ y\in U.
\end{equation*}%
This suffices for the proof of part (iii).

(iv). Since the carrier of $E$ is $G$ (cf. Lemma \ref{Lem1104}), there
exists a sequence $\left( H_{n}\right) _{n=1}^{\infty }$ in $\mathcal{B}%
\left( G\right) $ with $\chi _{H_{n}}\in E$ for all $n$ and $H_{n}\uparrow G$%
, i.e., $G\backslash H_{n}\downarrow \emptyset $. Given $\varepsilon >0$, it
follows from part (ii) that there is $N\in \mathbb{N}$ such that $\sup_{y\in
G}\left\Vert \chi _{G\backslash H_{N}}\left( \tau _{y}\chi _{A}\right)
\right\Vert _{E}\leq \varepsilon /3$. By part (iii), there is a
neighbourhood $U$ of $0\in G$ such that $\left\Vert \chi _{H_{N}}\left( \chi
_{A}-\tau _{y}\chi _{A}\right) \right\Vert _{E}\leq \varepsilon /3$ for all $%
y\in U$. Consequently, for each $y\in U$ we have 
\begin{eqnarray*}
\left\Vert \chi _{A}-\tau _{y}\chi _{A}\right\Vert _{E} &\leq &\left\Vert
\chi _{H_{N}}\left( \chi _{A}-\tau _{y}\chi _{A}\right) \right\Vert
_{E}+\left\Vert \chi _{G\backslash H_{N}}\left( \chi _{A}-\tau _{y}\chi
_{A}\right) \right\Vert _{E} \\
&\leq &\varepsilon /3+\left\Vert \chi _{G\backslash H_{N}}\left( \tau
_{0}\chi _{A}\right) \right\Vert _{E}+\left\Vert \chi _{G\backslash
H_{N}}\left( \tau _{y}\chi _{A}\right) \right\Vert _{E}\leq \varepsilon .
\end{eqnarray*}%
This completes the proof of part (iv).\medskip
\end{proof}

We can now establish the result alluded to above.

\begin{proposition}
\label{Prop1108}Let $E\neq \left\{ 0\right\} $ be a translation invariant
B.f.s. over $G$ with o.c.-norm. The following statements hold.

\begin{enumerate}
\item[(i)] For each $f\in E$ it is the case that $\lim_{y\rightarrow
0}\left\Vert f-\tau _{y}f\right\Vert _{E}=0$.

\item[(ii)] $L^{\infty }\left( G\right) \subseteq E\subseteq L^{1}\left(
G\right) $ with continuous inclusions.

\item[(iii)] $C\left( G\right) $ is a dense subspace of $E$.
\end{enumerate}
\end{proposition}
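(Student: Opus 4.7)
My plan is to prove the three parts in the stated order, using the characteristic-function estimates of Lemma \ref{Lem1105} as the engine and Bochner integration as the main technical device.

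For (i), the idea is to bootstrap Lemma \ref{Lem1105}(iv) from characteristic functions $\chi_A \in E$ to all of $E$. Linearity and translation invariance of the norm first extend the statement to every finite linear combination $s = \sum_k c_k \chi_{B_k}$ with each $\chi_{B_k} \in E$. Such functions are dense in $E$: for $0 \leq f \in E$, the standard pointwise monotone approximation yields simple $s_n \uparrow f$ with $s_n \in E$; each participating $\chi_{B_k}$ automatically satisfies $\chi_{B_k} \leq c_k^{-1} f \in E$ and therefore lies in $E$, and $\|f - s_n\|_E \to 0$ follows from the o.c.-norm. A routine $\varepsilon/3$-argument then delivers (i) for arbitrary $f \in E$.

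For (ii), the inclusion $E \subseteq L^{1}(G)$ is already Proposition \ref{Prop1107}, so only $L^{\infty}(G) \subseteq E$ remains, and for this it suffices to produce $\chi_G \in E$. Pick any $\chi_A \in E$ with $\mu(A) > 0$, available since the carrier of $E$ is $G$ (Lemma \ref{Lem1104}). By part (i) together with translation invariance of the norm, the map $y \mapsto \tau_y \chi_A$ is continuous from $G$ into $E$ with constant norm $\|\chi_A\|_E$. Since $G$ is compact, the $E$-valued Bochner integral
\begin{equation*}
I = \int_G \tau_y \chi_A \, d\mu(y)
\end{equation*}
exists in $E$. Because $E \hookrightarrow L^{1}(G)$ is continuous, $I$ agrees with the same integral formed in $L^{1}$, which by translation invariance of Haar measure is pointwise equal to $\mu(A)\,\chi_G$. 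Hence $\chi_G \in E$, and $|f| \leq \|f\|_{\infty}\chi_G$ gives the continuous embedding $L^{\infty}(G) \subseteq E$.

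For (iii), I will use the classical approximate-identity argument. Fix a sequence $(k_n) \subseteq C(G)$ with $k_n \geq 0$, $\int_G k_n \, d\mu = 1$, and supports shrinking to $\{0\}$. For $f \in E$, the $E$-valued Bochner integral $f_n = \int_G k_n(y) \tau_y f \, d\mu(y)$ exists by (i). Transferring to $L^{1}$ as above identifies $f_n$ with the ordinary convolution $k_n \ast f$, which is continuous on $G$ by the usual estimate that uses $f \in L^{1}(G)$ and uniform continuity of $k_n$; thus $f_n \in C(G)$. Using $\int_G k_n \, d\mu = 1$, one writes $f_n - f = \int_G k_n(y)(\tau_y f - f)\, d\mu(y)$ and estimates
\begin{equation*}
\|f_n - f\|_E \leq \int_G k_n(y)\, \|\tau_y f - f\|_E \, d\mu(y) \longrightarrow 0
\end{equation*}
by part (i), since the supports of $k_n$ shrink to $0$. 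The main obstacle throughout is the Bochner-integral manoeuvre used in (ii) and (iii): one must exploit the continuous embedding $E \hookrightarrow L^{1}(G)$ and Fubini to identify the abstract $E$-valued integral with a concrete measurable function, which is precisely what converts the norm-continuity of translations into an actual element of $E$.
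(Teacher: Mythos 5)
Your proposal is correct and follows essentially the same route as the paper's own proof: density of $E$-valued simple functions plus Lemma \ref{Lem1105}(iv) for (i), the Bochner integral of translates against Haar measure (identified with the convolution via the continuous embedding into $L^{1}(G)$) to produce $\chi _{G}\in E$ for (ii), and an approximate-identity convolution estimate for (iii). The only deviations are cosmetic: the paper first establishes the more general fact that $f\ast \lambda \in E$ for every $\lambda \in M\left( G\right) $ and then specializes to $\lambda =\mu $, and in (iii) it uses $h=\mu \left( U\right) ^{-1}\chi _{U}$ rather than a continuous kernel (and phrases the approximation per neighbourhood $U$ of $0$ rather than via a sequence, which is the safer formulation when $G$ is not metrizable).
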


\begin{proof}
(i). Denote by $\limfunc{sim}\left( \mathcal{B}\left( G\right) \right) $ the
space of all simple functions based on $\mathcal{B}\left( G\right) $. Claim: 
$\limfunc{sim}\left( \mathcal{B}\left( G\right) \right) \cap E$ is dense in $%
E$. Indeed, given $0\leq f\in E$, there exists a sequence $\left(
s_{n}\right) _{n=1}^{\infty }$ in $\limfunc{sim}\left( \mathcal{B}\left(
G\right) \right) ^{+}$ such that $0\leq s_{n}\uparrow _{n}f$ $\mu $-a.e. on $%
G$. Since $E$ is an ideal in $L^{0}\left( G\right) $, it is clear that $%
s_{n}\in \limfunc{sim}\left( \mathcal{B}\left( G\right) \right) \cap E$, for 
$n\in \mathbb{N}$. The order continuity of the norm in $E$ implies that $%
\left\Vert f-s_{n}\right\Vert _{E}\rightarrow 0$ as $n\rightarrow \infty $.
This suffices for the proof of the claim.

Let $s\in \limfunc{sim}\left( \mathcal{B}\left( G\right) \right) \cap E$.
Then $s=\sum_{j=1}^{k}\alpha _{j}\chi _{A_{j}}$, where $A_{j}\in \mathcal{B}%
\left( G\right) $ with $\chi _{A_{j}}\in E$ and $\alpha _{j}\in \mathbb{C}$,
for $j=1,\ldots ,n$. Therefore, Lemma \ref{Lem1105} (iv) implies that $%
\left\Vert s-\tau _{y}s\right\Vert _{E}\rightarrow 0$ as $y\rightarrow 0$ in 
$G$. Since $\limfunc{sim}\left( \mathcal{B}\left( G\right) \right) \cap E$
is dense in $E$, the result of (i) is now clear.

(ii). That $E\subseteq L^{1}\left( G\right) $ has been shown in Proposition %
\ref{Prop1107}. For the inclusion of $L^{\infty }\left( G\right) $ into $E$,
we begin with a general observation, which is of independent interest (only
special cases will be used in the proofs of (ii) and (iii)).

Let $f\in E$ and $\lambda \in M\left( G\right) $. Since $f\in L^{1}\left(
G\right) $ (cf. Proposition \ref{Prop1107}), the convolution $f\ast \lambda
\in L^{1}\left( G\right) $ exists (see e.g. \cite{Ru1}, Section 1.3.2). The
claim is that $f\ast \lambda \in E$. Indeed, it follows from part (i) that
the function $F:y\longmapsto \tau _{y}f$, for $y\in G$, is continuous from $%
G $ into $E$. So the range of $F$ is a compact metric space and hence, is
separable. Via the Pettis measurability theorem (\cite{DU}, p. 42) it
follows that $F$ is strongly $\left\vert \lambda \right\vert $-measurable
and bounded. Consequently, the Bochner integral $\int_{G}^{\left( B\right)
}Fd\lambda =\int_{G}^{\left( B\right) }\tau _{y}fd\lambda \left( y\right)
\in E\subseteq L^{1}\left( G\right) $ exists. Using \cite{DU}, Theorem
II.2.6 (for continuous functionals on $E$) and Fubini's theorem, we find for
all $g\in L^{\infty }\left( G\right) \subseteq E^{\times }\subseteq E^{\ast
} $ that 
\begin{eqnarray*}
\left\langle \int_{G}^{\left( B\right) }\tau _{y}fd\lambda \left( y\right)
,g\right\rangle &=&\int_{G}\left\langle \tau _{y}f,g\right\rangle d\lambda
\left( y\right) \\
&=&\int_{G}\left( \int_{G}\left( \tau _{y}f\right) \left( x\right) g\left(
x\right) d\mu \left( x\right) \right) d\lambda \left( y\right) \\
&=&\int_{G}\left( \int_{G}f\left( x-y\right) d\lambda \left( y\right)
\right) g\left( x\right) d\mu \left( x\right) =\left\langle f\ast \lambda
,g\right\rangle .
\end{eqnarray*}%
We can conclude that 
\begin{equation}
f\ast \lambda =\int_{G}^{\left( B\right) }\tau _{y}fd\lambda \left( y\right)
\in E,\ \ \ f\in E,\ \ \lambda \in M\left( G\right) .  \label{eq03}
\end{equation}%
This proves the claim.

Fix any $0<f_{0}\in E$. By what has just been proved, we have that $%
f_{0}\ast \mu \in E$. But, $f_{0}\ast \mu =\left( \int_{G}f_{0}d\mu \right)
\chi _{G}$ and so we can conclude that $\chi _{G}\in E$ and hence, that $%
L^{\infty }\left( G\right) \subseteq E$. Furthermore, if $f\in L^{\infty
}\left( G\right) $, then $\left\vert f\right\vert \leq \left\Vert
f\right\Vert _{\infty }\chi _{G}$ implies that $\left\Vert f\right\Vert
_{E}\leq \left\Vert \chi _{G}\right\Vert _{E}\left\Vert f\right\Vert
_{\infty }$. Consequently, the embedding $L^{\infty }\left( G\right)
\subseteq E$ is continuous.

(iii). Let $f\in E$ be fixed and $\varepsilon >0$ be given. It follows from
(i) that there exists an open neighbourhood $U$ of $0$ in $G$ such that $%
\left\Vert f-\tau _{y}f\right\Vert _{E}\leq \varepsilon $ for all $y\in U$.
Defining $h=\mu \left( U\right) ^{-1}\chi _{U}$, we have $f\ast h\in C\left(
G\right) $ (as $f\in L^{1}\left( G\right) $ and $h\in L^{\infty }\left(
G\right) $; cf. \cite{Ru1}, Section 1.1.6). Furthermore, (\ref{eq03})
implies that 
\begin{equation*}
f-\left( f\ast h\right) =\mu \left( U\right) ^{-1}\int_{U}^{\left( B\right)
}\left( f-\tau _{y}f\right) d\mu \left( y\right) ,
\end{equation*}%
which implies that 
\begin{equation*}
\left\Vert f-\left( f\ast h\right) \right\Vert _{E}\leq \mu \left( U\right)
^{-1}\int_{U}\left\Vert f-\tau _{y}f\right\Vert _{E}d\mu \left( y\right)
\leq \varepsilon ;
\end{equation*}%
cf. \cite{DU}, Theorem II.2.4 (ii). The proof is thereby complete. \medskip
\end{proof}

\begin{remark}
\label{Rem01}

\begin{enumerate}
\item[(a)] Let $E\neq \left\{ 0\right\} $ be a translation invariant B.f.s.
over $G$ with o.c.-norm. Then $f\ast \lambda \in E$ for all $f\in E$ and $%
\lambda \in M\left( G\right) $; see the proof of part (ii) in the above
proposition. Moreover, it follows from (\ref{eq03}) that $\left\Vert f\ast
\lambda \right\Vert _{E}\leq \left\Vert \lambda \right\Vert _{M\left(
G\right) }\left\Vert f\right\Vert _{E}$. Consequently, for each $\lambda \in
M\left( G\right) $, the operator $T_{\lambda }^{E}:E\rightarrow E$ of
convolution with $\lambda $ exists and satisfies $\left\Vert T_{\lambda
}^{E}\right\Vert \leq \left\Vert \lambda \right\Vert _{M\left( G\right) }$.

It can be shown that a similar result holds for translation invariant
B.f.s.' over $G$ which have the Fatou property. However, for general
translation invariant B.f.s.' over $G$ this result fails. In fact, for the
translation invariant B.f.s. $E_{\mathcal{O}}$ of Example \ref{Ex1103} it
can be shown that the only measures $\lambda \in M\left( G\right) $ for
which the convolution operator $T_{\lambda }^{E_{\mathcal{O}}}:E_{\mathcal{O}%
}\rightarrow E_{\mathcal{O}}$ exists, are the discrete measures. See \cite%
{OM}.

\item[(b)] A translation invariant B.f.s. $E$ over $G$ with the property
that \linebreak $\left\Vert f-\tau _{y}f\right\Vert _{E}\rightarrow 0$ as $%
y\rightarrow 0$ in $G$, for every $f\in E$, necessarily has o.c.-norm. The
proof of this fact is somewhat involved and is not needed in this note. See 
\cite{OM}. This fact also implies that any translation invariant B.f.s. over 
$G$ in which $C\left( G\right) $ is dense, necessarily has o.c.-norm.
\end{enumerate}
\end{remark}

Next we discuss the phenomenon that a general translation invariant B.f.s.
need not be reflection invariant (see the definition below). For any $f\in
L^{0}\left( G\right) $, its reflection $\tilde{f}\in L^{0}\left( G\right) $
is defined by 
\begin{equation*}
\tilde{f}\left( x\right) =f\left( -x\right) ,\ \ \ x\in G.
\end{equation*}

\begin{definition}
\label{Def1102}A B.f.s. $E$ over $G$ is called \emph{reflection invariant}
if it has the property that $\tilde{f}\in E$ with $\left\Vert \tilde{f}%
\right\Vert _{E}=\left\Vert f\right\Vert _{E}$ whenever $f\in E$.
\end{definition}

Evidently, every rearrangement invariant B.f.s. over $G$ is reflection
invariant. Note that the B.f.s. $E_{p\times q}$ of Example \ref{Ex02}, with $%
p\neq q$, is translation and reflection invariant but, it is not
rearrangement invariant. Furthermore, it is routine to verify, for any
reflection invariant B.f.s. $E$ over $G$ with carrier equal to $G$, that the
K\"{o}the dual $E^{\times }$ is also reflection invariant. However, a
translation invariant B.f.s. over $G$ need not be reflection invariant.

\begin{example}
\label{Ex01}Let $0<g\in L^{1}\left( G\right) $ be fixed. Define 
\begin{equation}
\left\Vert f\right\Vert _{E_{g}}=\sup_{y\in G}\int_{G}\left\vert
f\right\vert \left( \tau _{y}g\right) d\mu ,\ \ \ f\in L^{0}\left( G\right) ,
\label{eqRI02}
\end{equation}%
and let 
\begin{equation}
E_{g}=\left\{ f\in L^{0}\left( G\right) :\left\Vert f\right\Vert
_{E_{g}}<\infty \right\} ,  \label{eqRI01}
\end{equation}%
equipped with the norm $\left\Vert \cdot \right\Vert _{E_{g}}$. It is
readily verified that $E_{g}$ is a translation invariant B.f.s. over $G$
with the Fatou property. Actually, $E_{g}$ is the largest translation
invariant B.f.s. $F$ over $G$ with the property that $g\in F^{\times }$.

Consider the group $G=\mathbb{T}$ with normalized Haar measure (that is, $%
d\mu =\frac{1}{2\pi }dx$ on $\left[ -\pi ,\pi \right] $, where $dx$ denotes
Lebesgue measure). Functions on $\mathbb{T}$ will be identified with $2\pi $%
-periodic functions on $\mathbb{R}$. The $2\pi $-periodic function $g:%
\mathbb{R\rightarrow R}$ is defined by setting 
\begin{equation*}
g\left( x\right) =\left\{ 
\begin{array}{cc}
0 & \text{if }-\pi <x\leq 0 \\ 
\frac{1}{\sqrt{x}} & \text{if }0<x\leq \pi%
\end{array}%
\right. .
\end{equation*}%
Define the function norm $\left\Vert \cdot \right\Vert _{E_{g}}$ on $%
L^{0}\left( G\right) $ by (\ref{eqRI02}) and let $E_{g}$ be the translation
invariant B.f.s. defined by (\ref{eqRI01}). It is clear that the function $%
g\notin E_{g}$. However, a direct computation shows that $\tilde{g}\in E_{g}$%
. Therefore, $E_{g}$ is not reflection invariant. It can be shown that the
space $E_{g}$ does not have o.c.-norm. There also exist translation
invariant B.f.s.' over $\mathbb{T}$ which are not reflection invariant but,
have both the Fatou property and o.c.-norm.
\end{example}

\section{Fourier multiplier functions}

As before, $G$ is an infinite compact abelian group equipped with normalized
Haar measure $\mu $. Let $\Gamma $ be the dual group of $G$; see, for
example, Section 1.2 of \cite{Ru1}. For $x\in G$ and $\gamma \in \Gamma $ we
write $\gamma \left( x\right) =\left( x,\gamma \right) $. Since we
frequently consider $\Gamma $, as well as its linear span $\tau \left(
G\right) $, as subsets of $C\left( G\right) $, we shall write the group
operation in $\Gamma $ as \textit{multiplication}, that is, if $\gamma
_{1},\gamma _{2}\in \Gamma $, then $\gamma _{1}\gamma _{2}\in \Gamma $ is
given by 
\begin{equation*}
\left( x,\gamma _{1}\gamma _{2}\right) =\left( x,\gamma _{1}\right) \left(
x,\gamma _{2}\right) ,\ \ \ x\in G.
\end{equation*}%
The identity element of $\Gamma $ is, of course, $\chi _{G}$. In particular, 
\begin{equation}
\left( x,\gamma ^{-1}\right) =\overline{\left( x,\gamma \right) }=\left(
-x,\gamma \right) ,\ \ \ x\in G,\ \ \gamma \in \Gamma ,  \label{eqMF12}
\end{equation}%
where the bar denotes complex conjugation. Denote by $\tau \left( G\right) $
the linear subspace of $C\left( G\right) $ consisting of all \textit{%
trigonometric polynomials} on $G$, that is, $\tau \left( G\right) $ is the
linear span of $\Gamma \subseteq C\left( G\right) $. The following fact,
which is a consequence of Proposition \ref{Prop1108} (iii), will be used.

\begin{proposition}
\label{Prop02}Let $E\neq \left\{ 0\right\} $ be a translation invariant
B.f.s. over $G$ with o.c.-norm. Then $\tau \left( G\right) $ is a dense
subspace of $E$.
\end{proposition}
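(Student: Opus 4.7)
The plan is to combine three ingredients: the density of $C(G)$ in $E$ (which is Proposition \ref{Prop1108}(iii)), the density of $\tau(G)$ in $C(G)$ with respect to the supremum norm (a classical consequence of the Stone--Weierstrass theorem for compact abelian groups), and the continuity of the embedding $L^{\infty}(G)\subseteq E$ (which is the second half of Proposition \ref{Prop1108}(ii)).

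First, I would invoke Stone--Weierstrass. The set $\Gamma\subseteq C(G)$ separates the points of $G$ (by Pontryagin duality), contains the constant $\chi_{G}$, and is closed under complex conjugation because $\overline{\gamma}=\gamma^{-1}\in\Gamma$ for each $\gamma\in\Gamma$ by (\ref{eqMF12}). Its linear span $\tau(G)$ is therefore a conjugation-closed, unital, point-separating subalgebra of $C(G)$, so by the complex Stone--Weierstrass theorem $\tau(G)$ is $\|\cdot\|_{\infty}$-dense in $C(G)$.

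Next, set $c=\|\chi_{G}\|_{E}$, which is finite since Proposition \ref{Prop1108}(ii) gives $\chi_{G}\in L^{\infty}(G)\subseteq E$. For any $h\in L^{\infty}(G)$ one has $|h|\leq\|h\|_{\infty}\chi_{G}$, so by absolute monotonicity of the norm
\begin{equation*}
\|h\|_{E}\leq c\,\|h\|_{\infty},\qquad h\in L^{\infty}(G).
\end{equation*}
In particular, $\|\cdot\|_{\infty}$-convergence on $C(G)$ implies $\|\cdot\|_{E}$-convergence.

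Finally, given $f\in E$ and $\varepsilon>0$, Proposition \ref{Prop1108}(iii) furnishes $h\in C(G)$ with $\|f-h\|_{E}<\varepsilon/2$, and the Stone--Weierstrass step furnishes $p\in\tau(G)$ with $\|h-p\|_{\infty}<\varepsilon/(2c)$. Combining the two estimates via the inequality just displayed yields $\|h-p\|_{E}\leq c\,\|h-p\|_{\infty}<\varepsilon/2$, hence $\|f-p\|_{E}<\varepsilon$. This shows $\tau(G)$ is dense in $E$. There is no real obstacle here beyond recognising that Proposition \ref{Prop1108} does all the analytic work; the remaining ingredient is the purely algebraic Stone--Weierstrass argument for $\Gamma$.
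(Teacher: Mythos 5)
Your proof is correct and follows essentially the same route as the paper's: Stone--Weierstrass gives $\|\cdot\|_{\infty}$-density of $\tau(G)$ in $C(G)$, Proposition \ref{Prop1108}(iii) gives density of $C(G)$ in $E$, and the continuous embedding $L^{\infty}(G)\subseteq E$ bridges the two. You merely spell out the Stone--Weierstrass hypotheses and the $\varepsilon$-bookkeeping in more detail than the paper does.
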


\begin{proof}
The dual group $\Gamma $ separates the points of $G$ (see \cite{Ru1},
Section 1.5.2) and so it follows from the Stone-Weierstrass theorem that $%
\tau \left( G\right) $ is dense in $C\left( G\right) $ with respect to $%
\left\Vert \cdot \right\Vert _{\infty }$. By Proposition \ref{Prop1108}
(iii), $C\left( G\right) $ is dense in $E$. Since the embedding of $C\left(
G\right) $ into $E$ is continuous (as $L^{\infty }\left( G\right) \subseteq
E $ continuously), we can conclude that $\tau \left( G\right) $ is dense in $%
E$. \medskip
\end{proof}

For each $f\in L^{1}\left( G\right) $ its Fourier transform, denoted by $%
\hat{f}:\Gamma \rightarrow \mathbb{C}$, is given by 
\begin{equation*}
\hat{f}\left( \gamma \right) =\int_{G}f\left( x\right) \left( -x,\gamma
\right) d\mu \left( x\right) ,\ \ \ \gamma \in \Gamma ,
\end{equation*}%
and satisfies $\hat{f}\in c_{0}\left( \Gamma \right) $ with $\left\Vert \hat{%
f}\right\Vert _{\infty }\leq \left\Vert f\right\Vert _{1}$. Furthermore, if $%
f\in L^{1}\left( G\right) $ and $\hat{f}=0$, then $f=0$. For these facts,
see for instance Section 1.2 of \cite{Ru1}.

Let $E\neq \left\{ 0\right\} $ be translation invariant B.f.s.' over $G$.
Recall that $E$ is contained in $L^{1}\left( G\right) $ and so $\hat{f}$
exists for $f\in E$. A function $\varphi :\Gamma \rightarrow \mathbb{C}$ is
called an $E$\textit{-multiplier function} for $G$ if, for every $f\in E$,
there exists a function $g\in F$ such that $\hat{g}=\varphi \hat{f}$
(pointwise on $\Gamma $). If such a function $g$ exists, then it is
necessarily unique by the injectivity of the Fourier transform. Denote $g$
by $M_{\varphi }^{E}f$. The so defined map $M_{\varphi }^{E}:E\rightarrow E$
is linear and satisfies 
\begin{equation}
\left( M_{\varphi }^{E}f\right) ^{\wedge }=\varphi \hat{f},\ \ \ f\in E.
\label{eqMF02}
\end{equation}%
The collection of all $E$-multiplier functions for $G$ is denoted by $%
\mathcal{M}_{E}\left( G\right) $, which clearly is a (complex) vector space
of functions on $\Gamma $. For $E=L^{p}\left( G\right) $, where $1\leq p\leq
\infty $, we use the simpler notation $\mathcal{M}_{p}\left( G\right) $ in
place of $\mathcal{M}_{L^{p}\left( G\right) }\left( G\right) $. A routine
application of the closed graph theorem shows that $M_{\varphi }^{E}$ is
continuous, i.e., $M_{\varphi }^{E}\in \mathcal{L}\left( E\right) $, for
every $\varphi \in \mathcal{M}_{E}\left( G\right) $. Here, $\mathcal{L}%
\left( E\right) $ denotes the Banach space of all continuous linear
operators of $E$ into itself, equipped with the operator norm. It should be
observed that if $E$ is a translation invariant B.f.s. over $G$ with $%
L^{\infty }\left( G\right) \subseteq E$ (in which case $\gamma \in E$ for
all $\gamma \in \Gamma $) then, for all $\varphi \in \mathcal{M}_{E}\left(
G\right) $, we have 
\begin{equation}
M_{\varphi }^{E}\gamma =\varphi \left( \gamma \right) \gamma ,\ \ \ \gamma
\in \Gamma .  \label{eq04}
\end{equation}%
Indeed, given $\varphi \in \mathcal{M}_{E}\left( G\right) $ and $\gamma \in
\Gamma $, we have that $\hat{\gamma}=\chi _{\left\{ \gamma \right\} }$, from
which it follows that $\left( M_{\varphi }^{E}\gamma \right) ^{\wedge
}=\varphi \hat{\gamma}=\left( \varphi \left( \gamma \right) \gamma \right)
^{\wedge }$. This implies (\ref{eq04}). It follows from (\ref{eqMF02}) that $%
\mathcal{M}_{E}\left( G\right) $ is an algebra for pointwise multiplication
of functions on $\Gamma $.

It is shown in the following lemma that always $\mathcal{M}_{E}\left(
G\right) \subseteq \ell ^{\infty }\left( \Gamma \right) $. Recall that if $%
E\neq \left\{ 0\right\} $ is a translation invariant B.f.s. over $G$, then $%
L^{\infty }\left( G\right) \subseteq E^{\times }$; see Corollary \ref%
{Cor1102} (i). In particular, $\gamma \in E^{\times }$ for each $\gamma \in
\Gamma $. Moreover, $\left\Vert \gamma \right\Vert _{E^{\times }}=\left\Vert
\chi _{G}\right\Vert _{E^{\times }}$ for all $\gamma \in \Gamma $, because $%
\left\vert \gamma \right\vert =\chi _{G}$. Recall also that $\left\Vert
f\right\Vert _{E^{\times }}=\left\Vert f\right\Vert _{E^{\ast }}$ for each $%
f\in E^{\times }$; see the discussion following formula (\ref{eq06}).

\begin{lemma}
\label{LemMF06}Let $E\neq \left\{ 0\right\} $ be a translation invariant
B.f.s.' over $G$. Then $\mathcal{M}_{E}\left( G\right) \subseteq \ell
^{\infty }\left( \Gamma \right) $ and 
\begin{equation}
\left\Vert \varphi \right\Vert _{\infty }\leq \left\Vert M_{\varphi
}^{E}\right\Vert ,\ \ \ \varphi \in \mathcal{M}_{E}\left( G\right) .
\label{eqMF03}
\end{equation}
\end{lemma}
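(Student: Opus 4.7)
The plan is to show directly that $|\varphi(\gamma_0)| \leq \|M_\varphi^E\|$ for each fixed $\gamma_0 \in \Gamma$ and then take the supremum. The naive idea---apply $M_\varphi^E$ to the character $\gamma_0$ itself and invoke (\ref{eq04}) to obtain $M_\varphi^E \gamma_0 = \varphi(\gamma_0) \gamma_0$---would yield the bound at once from $\|\gamma_0\|_E = \|\chi_G\|_E$, but this only makes sense when $\gamma_0 \in E$, i.e., when $L^\infty(G) \subseteq E$, which is not an available hypothesis (see Example \ref{Ex1103}). The essential move is therefore to dualize: although $\gamma_0$ need not lie in $E$, by Corollary \ref{Cor1102}(i) we always have $\gamma_0^{-1} \in L^\infty(G) \subseteq E^\times$, and I would exploit it as a functional on $E$ rather than as an input to $M_\varphi^E$.

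Concretely, define the linear functional $F : E \to \mathbb{C}$ by $F(f) = \int_G f \gamma_0^{-1}\, d\mu$. Two facts about $F$ are crucial. First, comparing with the definition of the Fourier transform (using $\gamma_0^{-1}(x) = (-x,\gamma_0)$ from (\ref{eqMF12})) gives $F(f) = \hat{f}(\gamma_0)$ for every $f \in E$. Second, via the isometric identification of $E^\times$ with a subspace of $E^*$ explained right after (\ref{eq06}),
\begin{equation*}
\|F\|_{E^*} \;=\; \|\gamma_0^{-1}\|_{E^\times} \;=\; \|\chi_G\|_{E^\times},
\end{equation*}
since $|\gamma_0^{-1}| = \chi_G$; this quantity is strictly positive because $\chi_G \neq 0$ in $E^\times$.

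Applying $F$ to $M_\varphi^E f$ and using (\ref{eqMF02}),
\begin{equation*}
F(M_\varphi^E f) \;=\; (M_\varphi^E f)^{\wedge}(\gamma_0) \;=\; \varphi(\gamma_0)\,\hat{f}(\gamma_0) \;=\; \varphi(\gamma_0)\,F(f), \qquad f \in E,
\end{equation*}
so that $F \circ M_\varphi^E = \varphi(\gamma_0)\,F$ as elements of $E^*$. Taking operator norms gives
\begin{equation*}
|\varphi(\gamma_0)|\,\|F\|_{E^*} \;=\; \|F \circ M_\varphi^E\|_{E^*} \;\leq\; \|F\|_{E^*}\,\|M_\varphi^E\|,
\end{equation*}
and dividing by the positive number $\|F\|_{E^*} = \|\chi_G\|_{E^\times}$ produces $|\varphi(\gamma_0)| \leq \|M_\varphi^E\|$. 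Taking the supremum over $\gamma_0 \in \Gamma$ yields both the inclusion $\mathcal{M}_E(G) \subseteq \ell^\infty(\Gamma)$ and the inequality (\ref{eqMF03}). The one subtlety is the dual-side workaround in the first step: once one recognises that the eigenvector-style argument breaks down exactly when $L^\infty(G) \not\subseteq E$ and is repaired by pairing with $\gamma_0^{-1} \in E^\times$, the rest is a routine application of the $E^\times \hookrightarrow E^*$ isometry and the operator-norm inequality.
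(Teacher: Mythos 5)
Your proof is correct and is essentially the paper's own argument: the paper likewise passes to the adjoint, shows $\left( M_{\varphi }^{E}\right) ^{\ast }\gamma =\varphi \left( \gamma ^{-1}\right) \gamma $ for $\gamma \in E^{\times }\subseteq E^{\ast }$, and concludes via $\left\Vert \gamma \right\Vert _{E^{\times }}=\left\Vert \chi _{G}\right\Vert _{E^{\times }}$. Your only (harmless) deviation is indexing the functional by $\gamma _{0}^{-1}$ instead of $\gamma $, which avoids the reflected function $\tilde{\varphi}$ appearing at the end.
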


\begin{proof}
Fix $\varphi \in \mathcal{M}_{E}\left( G\right) $ and set $T=M_{\varphi
}^{E} $. As observed prior to the lemma, $T$ is continuous and so we can
consider the Banach space adjoint operator $T^{\ast }:E^{\ast }\rightarrow
E^{\ast }$. Let $\gamma \in \Gamma $ and consider $\gamma \in E^{\times
}\subseteq E^{\ast }$. The claim is that $T^{\ast }\gamma =\varphi \left(
\gamma ^{-1}\right) \gamma \in E^{\times }\subseteq E^{\ast }$. Indeed,
given $f\in E$ we have that 
\begin{eqnarray*}
\left\langle f,T^{\ast }\gamma \right\rangle &=&\left\langle Tf,\gamma
\right\rangle =\int_{G}\left( Tf\right) \left( x\right) \left( x,\gamma
\right) d\mu \left( x\right) \\
&=&\int_{G}\left( Tf\right) \left( x\right) \left( -x,\gamma ^{-1}\right)
d\mu \left( x\right) =\left( Tf\right) ^{\wedge }\left( \gamma ^{-1}\right)
\\
&=&\varphi \left( \gamma ^{-1}\right) \hat{f}\left( \gamma ^{-1}\right)
=\varphi \left( \gamma ^{-1}\right) \int_{G}f\left( x\right) \left(
-x,\gamma ^{-1}\right) d\mu \left( x\right) \\
&=&\varphi \left( \gamma ^{-1}\right) \int_{G}f\left( x\right) \left(
x,\gamma \right) d\mu \left( x\right) =\left\langle f,\varphi \left( \gamma
^{-1}\right) \gamma \right\rangle .
\end{eqnarray*}%
Since $E$ separates the elements of $E^{\ast }$, the claim follows. Defining
the \textit{reflected function} $\tilde{\varphi}:\Gamma \rightarrow \mathbb{C%
}$ by $\tilde{\varphi}\left( \gamma \right) =\varphi \left( \gamma
^{-1}\right) $, for $\gamma \in \Gamma $, we find that 
\begin{equation*}
\left\Vert T\right\Vert =\left\Vert T^{\ast }\right\Vert \geq \sup_{\gamma
\in \Gamma }\frac{\left\Vert T^{\ast }\gamma \right\Vert _{E^{\times }}}{%
\left\Vert \gamma \right\Vert _{E^{\times }}}=\sup_{\gamma \in \Gamma }\frac{%
\left\vert \tilde{\varphi}\left( \gamma \right) \right\vert \left\Vert
\gamma \right\Vert _{E^{\times }}}{\left\Vert \gamma \right\Vert _{E^{\times
}}}=\left\Vert \tilde{\varphi}\right\Vert _{\infty }.
\end{equation*}%
Since $\left\Vert \tilde{\varphi}\right\Vert _{\infty }=\left\Vert \varphi
\right\Vert _{\infty }$, this suffices for the proof. \medskip
\end{proof}

For the case $E=L^{p}\left( G\right) $ with $1\leq p<\infty $, the
inequality (\ref{eqMF03}) is well known; see Corollary 4.1.2 of \cite{La},
for example.

Define the family of operators 
\begin{equation}
\mathcal{M}_{E}^{\limfunc{op}}\left( G\right) =\left\{ M_{\varphi
}^{E}:\varphi \in \mathcal{M}_{E}\left( G\right) \right\} \subseteq \mathcal{%
L}\left( E\right) .  \label{eqMF13}
\end{equation}%
For each $x\in G$, a direct calculation shows that the Dirac point measure $%
\delta _{x}$ satisfies $\tau _{x}f=f\ast \delta _{x}$, for $f\in E$. Since $%
\tau _{x}\in \mathcal{L}\left( E\right) $ and $\hat{\delta}_{x}\left( \gamma
\right) =\left( -x,\gamma \right) $, for $\gamma \in \Gamma $, it follows
that $\left( x,\cdot \right) \in \mathcal{M}_{E}\left( G\right) $ for every $%
x\in G$ and $\left\{ \tau _{x}:x\in G\right\} \subseteq \mathcal{M}_{E}^{%
\limfunc{op}}\left( G\right) $. In particular, the identity operator $I=\tau
_{0}\in \mathcal{M}_{E}^{\limfunc{op}}\left( G\right) $ and the constant
function $\hat{\delta}_{0}=\mathbf{1}\in \mathcal{M}_{E}\left( G\right) $.

The following result is a consequence of Lemma \ref{LemMF06}.

\begin{corollary}
\label{CorMF05}Let $E\neq \left\{ 0\right\} $ be a translation invariant
B.f.s. over $G$. The set $\mathcal{M}_{E}\left( G\right) $ is a unital
subalgebra of $\ell ^{\infty }\left( \Gamma \right) $ and the map $\Phi :%
\mathcal{M}_{E}\left( G\right) \rightarrow \mathcal{L}\left( E\right) $,
defined by 
\begin{equation*}
\Phi \left( \varphi \right) =M_{\varphi }^{E},\ \ \ \varphi \in \mathcal{M}%
_{E}\left( G\right) ,
\end{equation*}%
is a unital algebra isomorphism onto the subalgebra $\mathcal{M}_{E}^{%
\limfunc{op}}\left( G\right) \subseteq \mathcal{L}\left( E\right) $.
Moreover, $\mathcal{M}_{E}^{\limfunc{op}}\left( G\right) $ is a unital
commutative Banach subalgebra of $\mathcal{L}\left( E\right) $.
\end{corollary}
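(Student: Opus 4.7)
The plan is to verify the claims in the order in which they are stated, leaning on Lemma \ref{LemMF06} (which already gives $\mathcal{M}_E(G)\subseteq \ell^\infty(\Gamma)$ together with the crucial bound $\|\varphi\|_\infty\leq\|M_\varphi^E\|$) and the injectivity of the Fourier transform on $L^1(G)$.

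First, I would observe that $\mathbf{1}=\hat{\delta}_0\in\mathcal{M}_E(G)$ (already noted in the text, since $\tau_0=I$), so $\mathcal{M}_E(G)$ is unital. For the algebra structure, given $\varphi,\psi\in\mathcal{M}_E(G)$ and $f\in E$, the element $h=M_\varphi^E M_\psi^E f\in E$ satisfies $\hat{h}=\varphi(M_\psi^E f)^\wedge=\varphi\psi\hat{f}$ by two applications of (\ref{eqMF02}). Hence $\varphi\psi\hat{f}$ is the Fourier transform of an element of $E$, namely of $h$, which shows $\varphi\psi\in\mathcal{M}_E(G)$ with $M_{\varphi\psi}^E=M_\varphi^E M_\psi^E$. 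Linearity of $\Phi$ is immediate from linearity of the Fourier transform, and the identity just derived gives both multiplicativity of $\Phi$ and, symmetrically, $M_\varphi^E M_\psi^E=M_\psi^E M_\varphi^E$, i.e.\ commutativity of $\mathcal{M}_E^{\mathrm{op}}(G)$. Moreover $\Phi(\mathbf{1})=M_{\mathbf{1}}^E=I$, so $\Phi$ is unital.

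Next I would record injectivity of $\Phi$: if $M_\varphi^E=0$, then Lemma \ref{LemMF06} yields $\|\varphi\|_\infty\leq\|M_\varphi^E\|=0$, so $\varphi=0$. This, together with the fact that $\Phi$ is a unital algebra homomorphism with image exactly $\mathcal{M}_E^{\mathrm{op}}(G)$ by definition, establishes that $\Phi$ is a unital algebra isomorphism onto $\mathcal{M}_E^{\mathrm{op}}(G)$.

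The only step that requires real work is showing that $\mathcal{M}_E^{\mathrm{op}}(G)$ is closed in $\mathcal{L}(E)$, and hence itself a Banach algebra. Suppose $(\varphi_n)\subseteq\mathcal{M}_E(G)$ and $M_{\varphi_n}^E\to T$ in operator norm. By Lemma \ref{LemMF06} applied to $\varphi_n-\varphi_m$,
\begin{equation*}
\|\varphi_n-\varphi_m\|_\infty\leq\|M_{\varphi_n}^E-M_{\varphi_m}^E\|,
\end{equation*}
so $(\varphi_n)$ is Cauchy in $\ell^\infty(\Gamma)$ and converges uniformly to some $\varphi\in\ell^\infty(\Gamma)$. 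For any fixed $f\in E$, the continuous embedding $E\hookrightarrow L^1(G)$ of Proposition \ref{Prop1107} gives $M_{\varphi_n}^E f\to Tf$ in $L^1(G)$, whence $(M_{\varphi_n}^E f)^\wedge\to(Tf)^\wedge$ uniformly on $\Gamma$. On the other hand, $(M_{\varphi_n}^E f)^\wedge=\varphi_n\hat{f}\to\varphi\hat{f}$ pointwise (in fact uniformly, since $\hat{f}\in c_0(\Gamma)$). Therefore $(Tf)^\wedge=\varphi\hat{f}$, which shows $\varphi\in\mathcal{M}_E(G)$ and $T=M_\varphi^E\in\mathcal{M}_E^{\mathrm{op}}(G)$. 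This closedness is the main obstacle, and the bound in Lemma \ref{LemMF06} is what makes it go through; without that inequality one could not manufacture a candidate symbol $\varphi$ for the limit operator.
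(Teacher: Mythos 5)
Your proof is correct and follows essentially the same route as the paper: injectivity and the Cauchy property of $(\varphi_n)$ both come from the norm estimate in Lemma \ref{LemMF06}, and the limit operator is identified via the continuous embedding $E\subseteq L^{1}(G)$ together with the continuity and injectivity of the Fourier transform. You merely spell out the algebra-homomorphism verifications that the paper dismisses as routine.
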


\begin{proof}
It is routine to verify that $\mathcal{M}_{E}\left( G\right) $ is a unital
subalgebra of $\ell ^{\infty }\left( \Gamma \right) $ and that the map $\Phi
:\mathcal{M}_{E}\left( G\right) \rightarrow \mathcal{L}\left( E\right) $ is
a unital algebra homomorphism. Consequently, $\mathcal{M}_{E}^{\limfunc{op}%
}\left( G\right) =\Phi \left( \mathcal{M}_{E}\left( G\right) \right) $ is a
unital commutative subalgebra of $\mathcal{L}\left( E\right) $. The
injectivity of $\Phi $ follows from Lemma \ref{LemMF06}.

To show that $\mathcal{M}_{E}^{\limfunc{op}}\left( G\right) $ is norm closed
in $\mathcal{L}\left( E\right) $, let $\left( M_{\varphi _{n}}^{E}\right)
_{n=1}^{\infty }$ be a sequence in $\mathcal{M}_{E}^{\limfunc{op}}\left(
G\right) $ satisfying $\left\Vert S-M_{\varphi _{n}}^{E}\right\Vert
\rightarrow 0$ as $n\rightarrow \infty $ for some $S\in \mathcal{L}\left(
E\right) $. Lemma \ref{LemMF06} implies that $\left( \varphi _{n}\right)
_{n=1}^{\infty }$ is a Cauchy sequence in $\ell ^{\infty }\left( \Gamma
\right) $. So, there exists $\varphi \in \ell ^{\infty }\left( \Gamma
\right) $ such that $\left\Vert \varphi -\varphi _{n}\right\Vert _{\infty
}\rightarrow 0$ for $n\rightarrow \infty $. It follows, for a fixed $f\in
E\subseteq L^{1}\left( G\right) $, that $\hat{f}\in c_{0}\left( \Gamma
\right) $ and so 
\begin{equation}
\left( M_{\varphi _{n}}^{E}f\right) ^{\wedge }=\varphi _{n}\hat{f}%
\rightarrow \varphi \hat{f},\ \ \ n\rightarrow \infty ,  \label{eqMF18}
\end{equation}%
in $\ell ^{\infty }\left( \Gamma \right) $. On the other hand, $M_{\varphi
_{n}}^{E}f\rightarrow Sf$ in $E$ for $n\rightarrow \infty $. Since $E$ is
continuously included in $L^{1}\left( G\right) $, it follows that $%
M_{\varphi _{n}}^{E}f\rightarrow Sf$ in $L^{1}\left( G\right) $. By the
continuity of the Fourier transform from $L^{1}\left( G\right) $ into $%
c_{0}\left( \Gamma \right) $ we can conclude that $\left( M_{\varphi
_{n}}^{E}f\right) ^{\wedge }\rightarrow \left( Sf\right) ^{\wedge }$ in $%
\ell ^{\infty }\left( \Gamma \right) $. Then (\ref{eqMF18}) implies that $%
\left( Sf\right) ^{\wedge }=\varphi \hat{f}$. But, $f\in E$ is arbitrary and
so $\varphi \in \mathcal{M}_{E}\left( G\right) $ with $S=M_{\varphi }^{E}$.
The proof is complete.\medskip\ 
\end{proof}

\begin{remark}

\begin{enumerate}
\item[(a)] For $E=L^{2}\left( G\right) $ it is a classical result that $%
\mathcal{M}_{2}\left( G\right) =\ell ^{\infty }\left( \Gamma \right) $,
which is an immediate consequence of the fact that the Fourier transform is
a surjective isometry from $L^{2}\left( G\right) $ onto $\ell ^{2}\left(
\Gamma \right) $. Conversely, if $E$ is a translation invariant B.f.s. over $%
G$ with $L^{\infty }\left( G\right) \subseteq E$ and $\mathcal{M}_{E}\left(
G\right) =\ell ^{\infty }\left( \Gamma \right) $, then it follows from
Proposition 1.6 of \cite{dPR} that $E=L^{2}\left( G\right) $ with equivalent
norms.

\item[(b)] Let $E\neq \left\{ 0\right\} $ be a translation invariant B.f.s.
over $G$ with \emph{o.c.-norm}. As observed in Remark \ref{Rem01}, for each $%
\lambda \in M\left( G\right) $ the operator $T_{\lambda }^{E}:E\rightarrow E$
of convolution with $\lambda $ exists. Recalling that $\left( f\ast \lambda
\right) ^{\wedge }=\hat{\lambda}\hat{f}$ for $f\in L^{1}\left( G\right) $
and $\lambda \in M\left( G\right) $, it follows that $\hat{\lambda}\in 
\mathcal{M}_{E}\left( G\right) $ with $M_{\hat{\lambda}}^{E}=T_{\lambda
}^{E} $ for all $\lambda \in M\left( G\right) $. Hence, 
\begin{equation*}
\left\{ \hat{\lambda}:\lambda \in M\left( G\right) \right\} \subseteq 
\mathcal{M}_{E}\left( G\right) .
\end{equation*}%
A similar result holds if $E$ has the Fatou property.
\end{enumerate}
\end{remark}

For any function $f\in L^{0}\left( G\right) $, the function $f^{\natural
}\in L^{0}\left( G\right) $ is defined by 
\begin{equation*}
f^{\natural }\left( x\right) =\overline{f\left( -x\right) },\ \ \ \mu \text{%
-a.e}\ \ x\in G.
\end{equation*}%
Let $E$ be a reflection and translation invariant B.f.s. over $G$. Then 
\begin{equation*}
f^{\natural }\in E\ \ \text{and\ \ }\left\Vert f^{\natural }\right\Vert
_{E}=\left\Vert f\right\Vert _{E},\ \ \ f\in E,
\end{equation*}%
and the map $f\longmapsto f^{\natural }$, for $f\in E$, is a conjugate
linear isometric involution in $E$. For each operator $T\in \mathcal{L}%
\left( E\right) $ the operator $T^{\natural }\in \mathcal{L}\left( E\right) $
is defined by 
\begin{equation*}
T^{\natural }f=\left( Tf^{\natural }\right) ^{\natural },\ \ \ f\in E,
\end{equation*}%
and satisfies $\left\Vert T^{\natural }\right\Vert =\left\Vert T\right\Vert $%
. Then $T\longmapsto T^{\sharp }$, for $T\in \mathcal{L}\left( E\right) $,
is a conjugate linear isometric involution in $\mathcal{L}\left( E\right) $.

\begin{proposition}
\label{Prop01}Let $E\neq \left\{ 0\right\} $ be a translation and reflection
invariant B.f.s. over $G$.

\begin{enumerate}
\item[(i)] If $\varphi \in \mathcal{M}_{E}\left( G\right) $, then also $\bar{%
\varphi}\in \mathcal{M}_{E}\left( G\right) $ and $M_{\bar{\varphi}%
}^{E}=\left( M_{\varphi }^{E}\right) ^{\natural }$.

\item[(ii)] The map $M_{\varphi }^{E}\longmapsto M_{\bar{\varphi}}^{E}$, for 
$\varphi \in \mathcal{M}_{E}\left( G\right) $, is a conjugate linear
isometric involution in $\mathcal{M}_{E}^{\limfunc{op}}\left( G\right) $.
\end{enumerate}
\end{proposition}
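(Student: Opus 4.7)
The plan is to show that on any $f\in E$ one has an explicit formula for the Fourier transform of $f^{\natural}$, and then read off part (i) directly from the definition of a multiplier. Part (ii) is then an immediate consequence, because the map $T\longmapsto T^{\natural}$ is already known to be a conjugate linear isometric involution on $\mathcal{L}(E)$.

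The key computation is the identity
\begin{equation*}
\bigl(f^{\natural}\bigr)^{\wedge}(\gamma)\;=\;\overline{\hat f(\gamma)},\qquad f\in L^{1}(G),\ \gamma\in\Gamma .
\end{equation*}
I would verify this by substituting $y=-x$ in the defining integral of $(f^{\natural})^{\wedge}(\gamma)$, using that Haar measure on the abelian group $G$ is inversion invariant ($\mu(-A)=\mu(A)$), and then applying (\ref{eqMF12}) in the form $(x,\gamma)=\overline{(-x,\gamma)}$ to move the complex conjugation outside the integral. Note that $E$ being reflection invariant combined with the trivial identity $|\overline{\tilde f}|=|\tilde f|$ guarantees $f^{\natural}\in E$ with $\|f^{\natural}\|_{E}=\|f\|_{E}$, as is already recorded in the discussion preceding the proposition.

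For part (i), fix $\varphi\in\mathcal{M}_{E}(G)$ and $f\in E$. Since $f^{\natural}\in E$, the element $g:=M_{\varphi}^{E}f^{\natural}\in E$ is defined and $\hat g=\varphi\,\widehat{f^{\natural}}$. Applying $\natural$ again one gets $g^{\natural}=(M_{\varphi}^{E})^{\natural}f\in E$, and using the key identity twice,
\begin{equation*}
\bigl(g^{\natural}\bigr)^{\wedge}\;=\;\overline{\hat g}\;=\;\overline{\varphi\,\widehat{f^{\natural}}}\;=\;\bar\varphi\,\overline{\widehat{f^{\natural}}}\;=\;\bar\varphi\,\hat f .
\end{equation*}
Since $f\in E$ was arbitrary and $g^{\natural}\in E$, this says precisely that $\bar\varphi\in\mathcal{M}_{E}(G)$ and that $M_{\bar\varphi}^{E}f=(M_{\varphi}^{E})^{\natural}f$, yielding the equality of operators asserted in (i).

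For part (ii), the map $T\longmapsto T^{\natural}$ on $\mathcal{L}(E)$ is already known to be a conjugate linear isometric involution (as noted in the paragraph introducing $T^{\natural}$). By (i), the map $M_{\varphi}^{E}\longmapsto M_{\bar\varphi}^{E}$ is the restriction of this involution to the subalgebra $\mathcal{M}_{E}^{\operatorname{op}}(G)\subseteq\mathcal{L}(E)$, which by Corollary \ref{CorMF05} is a (norm closed) unital commutative subalgebra and is therefore stable under $\natural$ thanks to (i). This immediately gives conjugate linearity, the isometric property, and the involutive property on $\mathcal{M}_{E}^{\operatorname{op}}(G)$. No step should present a real obstacle; the only thing that needs any care is the bookkeeping around complex conjugation and reflection in the Fourier transform identity above.
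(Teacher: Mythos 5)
Your proof is correct and follows the same route as the paper: the paper's proof of (i) consists precisely of the ``direct (but careful) computation'' that $\bigl(\bigl(M_{\varphi}^{E}\bigr)^{\natural}f\bigr)^{\wedge}=\bar{\varphi}\hat{f}$, which you carry out explicitly via the identity $\bigl(f^{\natural}\bigr)^{\wedge}=\overline{\hat{f}}$, and (ii) is deduced in both cases from (i) together with the remarks preceding the proposition on the involution $T\longmapsto T^{\natural}$.
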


\begin{proof}
(i). Let $\varphi \in \mathcal{M}_{E}\left( G\right) $. A direct (but
careful) computation shows that 
\begin{equation*}
\left( \left( M_{\varphi }^{E}\right) ^{\natural }f\right) ^{\wedge }=\bar{%
\varphi}\hat{f},\ \ \ f\in E.
\end{equation*}%
Consequently, $\bar{\varphi}\in \mathcal{M}_{E}\left( G\right) $ and $M_{%
\bar{\varphi}}^{E}=\left( M_{\varphi }^{E}\right) ^{\natural }$.

(ii). This follows immediately from part (i) in combination with the
observations made prior to the present proposition.\medskip
\end{proof}

For $E=L^{2}\left( G\right) $ and $\varphi \in \mathcal{M}_{2}\left(
G\right) =\ell ^{\infty }\left( \Gamma \right) $, it follows (via
Plancherel's formula) that $M_{\bar{\varphi}}^{\left( 2\right) }=\left(
M_{\varphi }^{\left( 2\right) }\right) ^{\ast }$ is the Hilbert space
adjoint of $M_{\varphi }^{\left( 2\right) }$.

\section{A Fuglede type theorem}

Again let $G$ be an infinite compact abelian group with normalized Haar
measure $\mu $. As observed in Proposition \ref{Prop01} (i), if $E$ is a
translation and reflection invariant B.f.s. over $G$, then $\bar{\varphi}\in 
\mathcal{M}_{E}\left( G\right) $ whenever $\varphi \in \mathcal{M}_{E}\left(
G\right) $. For $E=L^{2}\left( G\right) $, we noted that $M_{\bar{\varphi}%
}^{\left( 2\right) }=\left( M_{\varphi }^{\left( 2\right) }\right) ^{\ast }$
is the Hilbert space adjoint of $M_{\varphi }^{\left( 2\right) }$. In
particular, $M_{\varphi }^{\left( 2\right) }$ is a normal operator on $%
L^{2}\left( G\right) $ (as $\mathcal{M}_{2}^{\limfunc{op}}\left( G\right) $
is commutative; cf. Corollary \ref{CorMF05}). Consequently, it follows from
Fuglede's theorem (see, for instance, Theorem IX.6.7 in \cite{Co}) that if $%
\varphi \in \mathcal{M}_{2}\left( G\right) =\ell ^{\infty }\left( \Gamma
\right) $ and $T\in \mathcal{L}\left( L^{2}\left( G\right) \right) $ satisfy 
$M_{\varphi }^{\left( 2\right) }T=TM_{\varphi }^{\left( 2\right) }$, then
also $M_{\bar{\varphi}}^{\left( 2\right) }T=TM_{\bar{\varphi}}^{\left(
2\right) }$. As noted in the Introduction, this latter result was extended
to the case $E=L^{p}\left( \mathbb{T}\right) $ with $1\leq p<\infty $ in
Theorem 3.1 of \cite{MR}. The purpose of this section to extend this result
to all translation and reflection invariant B.f.s.' with o.c.-norm over
arbitrary compact abelian groups (see Theorem \ref{Thm01} below).

For any $f\in L^{1}\left( G\right) $ we denote 
\begin{equation*}
\limfunc{supp}\left( \hat{f}\right) =\left\{ \gamma \in \Gamma :\hat{f}%
\left( \gamma \right) \neq 0\right\} .
\end{equation*}%
We begin with the following result.

\begin{lemma}
\label{LemFTA01}Let $E$ be a translation invariant B.f.s. over $G$ with $%
L^{\infty }\left( G\right) \subseteq E$. Let $T\in \mathcal{L}\left(
E\right) $ and $\varphi ,\psi \in \mathcal{M}_{E}\left( G\right) $.

\begin{enumerate}
\item[(i)] Suppose that $M_{\varphi }^{E}T=TM_{\psi }^{E}$. Then, for every $%
\gamma \in \Gamma $, it is the case that $\varphi \left( \xi \right) =\psi
\left( \gamma \right) $ for all $\xi \in \limfunc{supp}\left( \left( T\gamma
\right) ^{\wedge }\right) $.

\item[(ii)] Assume, in addition, that $E$ has o.c.-norm. Let $\varphi $ and $%
\psi $ have the property that, for each $\gamma \in \Gamma $, we have $%
\varphi \left( \xi \right) =\psi \left( \gamma \right) $ for all $\xi \in 
\limfunc{supp}\left( \left( T\gamma \right) ^{\wedge }\right) $. Then $%
M_{\varphi }^{E}T=TM_{\psi }^{E}$.
\end{enumerate}
\end{lemma}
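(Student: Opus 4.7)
The plan is to prove both parts by testing the identity $M_{\varphi}^{E}T = TM_{\psi}^{E}$ on the characters $\gamma \in \Gamma$, where everything becomes transparent thanks to (\ref{eq04}), and then to pass from $\Gamma$ to all of $E$ via density of $\tau(G)$ in $E$ (Proposition \ref{Prop02}). Note that $L^{\infty}(G)\subseteq E$ guarantees $\Gamma\subseteq E$, so the expressions $T\gamma$ and $M_{\psi}^{E}\gamma$ make sense.

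For part (i), I would apply both sides of $M_{\varphi}^{E}T = TM_{\psi}^{E}$ to a fixed $\gamma \in \Gamma$. By (\ref{eq04}), $M_{\psi}^{E}\gamma = \psi(\gamma)\gamma$, so the right-hand side becomes $\psi(\gamma)\, T\gamma$, whose Fourier transform is $\psi(\gamma)(T\gamma)^{\wedge}$. On the left-hand side, $M_{\varphi}^{E}(T\gamma)$ has Fourier transform $\varphi\cdot (T\gamma)^{\wedge}$ by (\ref{eqMF02}). Since $E\subseteq L^{1}(G)$ (Proposition \ref{Prop1107}) and the Fourier transform on $L^{1}(G)$ is injective, the two functions on $\Gamma$ must agree pointwise:
\begin{equation*}
\varphi(\xi)(T\gamma)^{\wedge}(\xi) = \psi(\gamma)(T\gamma)^{\wedge}(\xi), \qquad \xi \in \Gamma.
\end{equation*}
On $\xi \in \operatorname{supp}((T\gamma)^{\wedge})$ the factor $(T\gamma)^{\wedge}(\xi)$ is nonzero and can be cancelled, giving $\varphi(\xi)=\psi(\gamma)$, as desired.

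For part (ii), I would reverse this computation. Fix $\gamma \in \Gamma$. The hypothesis together with the definition of $\operatorname{supp}((T\gamma)^{\wedge})$ gives $\varphi(\xi)(T\gamma)^{\wedge}(\xi) = \psi(\gamma)(T\gamma)^{\wedge}(\xi)$ for every $\xi \in \Gamma$ (the equality is trivial outside the support). Hence $(M_{\varphi}^{E}T\gamma)^{\wedge} = \varphi\cdot (T\gamma)^{\wedge} = \psi(\gamma)(T\gamma)^{\wedge} = (TM_{\psi}^{E}\gamma)^{\wedge}$, and injectivity of the Fourier transform on $L^{1}(G)$ yields $M_{\varphi}^{E}T\gamma = TM_{\psi}^{E}\gamma$. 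By linearity, the operators $M_{\varphi}^{E}T$ and $TM_{\psi}^{E}$ agree on the linear span $\tau(G)$.

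The final step, and the only place order continuity enters, is to promote this equality from $\tau(G)$ to all of $E$. Both $M_{\varphi}^{E}T$ and $TM_{\psi}^{E}$ lie in $\mathcal{L}(E)$, hence are continuous on $E$. By Proposition \ref{Prop02}, the hypothesis that $E$ has o.c.-norm ensures $\tau(G)$ is norm-dense in $E$, so two continuous operators agreeing on $\tau(G)$ agree on $E$. This is really the only "obstacle": without o.c.-norm, density of trigonometric polynomials can fail (as alluded to in the introduction), and the character-wise identity would not extend to the whole space.
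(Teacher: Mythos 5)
Your argument is correct and follows essentially the same route as the paper: test the intertwining identity on characters using $M_{\psi }^{E}\gamma =\psi \left( \gamma \right) \gamma $, compare Fourier transforms to cancel on $\limfunc{supp}\left( \left( T\gamma \right) ^{\wedge }\right) $, and for the converse extend from $\tau \left( G\right) $ to $E$ by density (Proposition \ref{Prop02}) and continuity. The only cosmetic remark is that in part (i) injectivity of the Fourier transform is not needed (equal functions in $L^{1}\left( G\right) $ trivially have equal transforms); it is in part (ii) that injectivity is genuinely used, exactly as in the paper.
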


\begin{proof}
(i). Fix $\gamma \in \Gamma $. Since $L^{\infty }\left( G\right) \subseteq E$%
, we have that $\gamma \in E$ and so 
\begin{equation}
M_{\varphi }^{E}T\gamma =TM_{\psi }^{E}\gamma .  \label{eqFTA03}
\end{equation}%
Recalling from (\ref{eq04}) that $M_{\psi }^{E}\gamma =\psi \left( \gamma
\right) \gamma $, it follows that $TM_{\psi }^{E}\gamma =\psi \left( \gamma
\right) T\gamma $. Hence, 
\begin{equation}
\left( TM_{\psi }^{E}\gamma \right) ^{\wedge }=\psi \left( \gamma \right)
\left( T\gamma \right) ^{\wedge }.  \label{eqFTA01}
\end{equation}%
On the other hand, 
\begin{equation}
\left( M_{\varphi }^{E}T\gamma \right) ^{\wedge }=\varphi \cdot \left(
T\gamma \right) ^{\wedge }.  \label{eqFTA02}
\end{equation}%
A combination of (\ref{eqFTA03}), (\ref{eqFTA01}) and (\ref{eqFTA02}) yields
that 
\begin{equation}
\varphi \left( \xi \right) \left( T\gamma \right) ^{\wedge }\left( \xi
\right) =\psi \left( \gamma \right) \left( T\gamma \right) ^{\wedge }\left(
\xi \right) ,  \label{eqFTA04}
\end{equation}%
for all $\xi \in \Gamma $. If $\limfunc{supp}\left( \left( T\gamma \right)
^{\wedge }\right) =\emptyset $, then there is nothing to be proved. So,
suppose there exists $\xi \in \limfunc{supp}\left( \left( T\gamma \right)
^{\wedge }\right) $. Then $\left( T\gamma \right) ^{\wedge }\left( \xi
\right) \neq 0$ and hence, (\ref{eqFTA04}) implies that $\varphi \left( \xi
\right) =\psi \left( \gamma \right) $. This proves part (i).

(ii). Fix $\gamma \in \Gamma $. If $\xi \in \Gamma \backslash \limfunc{supp}%
\left( \left( T\gamma \right) ^{\wedge }\right) $, i.e., $\left( T\gamma
\right) ^{\wedge }\left( \xi \right) =0$, then it is clear that (\ref%
{eqFTA04}) holds. On the other hand, if $\xi \in \limfunc{supp}\left( \left(
T\gamma \right) ^{\wedge }\right) $, then it follows from the hypothesis
that (\ref{eqFTA04}) holds. Hence, 
\begin{equation*}
\varphi \cdot \left( T\gamma \right) ^{\wedge }=\psi \left( \gamma \right)
\left( T\gamma \right) ^{\wedge }.
\end{equation*}%
Via (\ref{eqFTA01}) and (\ref{eqFTA02}) this yields that 
\begin{equation*}
\left( M_{\varphi }^{E}T\gamma \right) ^{\wedge }=\left( TM_{\psi
}^{E}\gamma \right) ^{\wedge }.
\end{equation*}%
The uniqueness of Fourier transforms in $L^{1}\left( G\right) $ then implies
(\ref{eqFTA03}).

Since $\gamma \in \Gamma $ is arbitrary, the linearity of $T$, $M_{\varphi
}^{E}$ and $M_{\psi }^{E}$ imply that 
\begin{equation*}
M_{\varphi }^{E}Tg=TM_{\psi }^{E}g,\ \ \ g\in \tau \left( G\right) ,
\end{equation*}%
where, as before, $\tau \left( G\right) \subseteq L^{\infty }\left( G\right)
\subseteq E$ denotes the space of all trigonometric polynomials on $G$. But, 
$E$ has o.c.-norm and so $\tau \left( G\right) $ is dense in $E$; see
Proposition \ref{Prop02}. The operators $T$, $M_{\varphi }^{E}$and $M_{\psi
}^{E}$ are continuous and hence, we can conclude that $M_{\varphi
}^{E}T=TM_{\psi }^{E}$. The proof is thereby complete. \medskip
\end{proof}

The following Fuglede type theorem is a consequence of Lemma \ref{LemFTA01}.

\begin{theorem}
\label{Thm01}Let $E\neq \left\{ 0\right\} $ be a translation and reflection
invariant B.f.s. over $G$ with o.c.-norm. Suppose that $\varphi ,\psi \in 
\mathcal{M}_{E}\left( G\right) $ and $T\in \mathcal{L}\left( E\right) $
satisfy $M_{\varphi }^{E}T=TM_{\psi }^{E}$. Then $M_{\bar{\varphi}}^{E}T=TM_{%
\bar{\psi}}^{E}$.
\end{theorem}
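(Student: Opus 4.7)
The plan is to deduce Theorem \ref{Thm01} directly from Lemma \ref{LemFTA01} by passing to complex conjugates. Observe first that all structural hypotheses needed to apply Lemma \ref{LemFTA01} are in place: since $E$ has o.c.-norm, Proposition \ref{Prop1108}(ii) gives $L^{\infty}(G)\subseteq E$ (so $\Gamma \subseteq E$), and since $E$ is additionally reflection invariant, Proposition \ref{Prop01}(i) ensures that $\bar{\varphi},\bar{\psi}\in \mathcal{M}_{E}(G)$ whenever $\varphi ,\psi \in \mathcal{M}_{E}(G)$.

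First, I would apply Lemma \ref{LemFTA01}(i) to the hypothesis $M_{\varphi}^{E}T=TM_{\psi}^{E}$. This yields the following combinatorial/arithmetic condition on the Fourier side: for every $\gamma \in \Gamma$,
\begin{equation*}
\varphi(\xi)=\psi(\gamma)\qquad \text{for all } \xi \in \operatorname{supp}\bigl((T\gamma)^{\wedge}\bigr).
\end{equation*}
Taking complex conjugates of this scalar identity trivially gives
\begin{equation*}
\bar{\varphi}(\xi)=\bar{\psi}(\gamma)\qquad \text{for all } \xi \in \operatorname{supp}\bigl((T\gamma)^{\wedge}\bigr),
\end{equation*}
for every $\gamma \in \Gamma$.

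Next, I would apply Lemma \ref{LemFTA01}(ii) to the pair $(\bar{\varphi},\bar{\psi})$ in place of $(\varphi ,\psi )$, with the same operator $T$. The multipliers $\bar{\varphi},\bar{\psi}$ lie in $\mathcal{M}_{E}(G)$ (as noted above), $E$ has o.c.-norm, and $L^{\infty}(G)\subseteq E$; the support condition just derived is exactly the hypothesis required by part (ii) of the lemma. The conclusion $M_{\bar{\varphi}}^{E}T=TM_{\bar{\psi}}^{E}$ follows immediately.

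I do not foresee any real obstacle: the entire content of the theorem has been isolated in Lemma \ref{LemFTA01}, whose part (i) only extracts the pointwise relation between $\varphi$ and $\psi$ on the supports $\operatorname{supp}((T\gamma )^{\wedge})$ from the intertwining relation, and whose part (ii) reconstructs an intertwining relation from such a pointwise identity via the density of $\tau(G)$ in $E$ (Proposition \ref{Prop02}). The only check is the transfer of hypotheses, which is automatic once reflection invariance and o.c.-norm are invoked; the scalar identity is plainly invariant under complex conjugation.
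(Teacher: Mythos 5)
Your proposal is correct and follows essentially the same route as the paper's own proof: invoke reflection invariance and order continuity to place $\bar{\varphi},\bar{\psi}$ in $\mathcal{M}_{E}(G)$ and get $L^{\infty}(G)\subseteq E$, extract the pointwise identity $\varphi(\xi)=\psi(\gamma)$ on $\operatorname{supp}((T\gamma)^{\wedge})$ from Lemma \ref{LemFTA01}(i), conjugate it, and feed it back into Lemma \ref{LemFTA01}(ii). No gaps.
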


\begin{proof}
The reflection invariance of $E$ ensures that $\bar{\varphi},\bar{\psi}\in 
\mathcal{M}_{E}\left( G\right) $ and the order continuity of the norm in $E$
implies that $L^{\infty }\left( G\right) \subseteq E$. By Lemma \ref%
{LemFTA01} (i), the condition $M_{\varphi }^{E}T=TM_{\psi }^{E}$ implies,
for every $\gamma \in \Gamma $, that 
\begin{equation}
\varphi \left( \xi \right) =\psi \left( \gamma \right) ,\ \ \ \xi \in 
\limfunc{supp}\left( \left( T\gamma \right) ^{\wedge }\right) .
\label{eqFTA05}
\end{equation}%
It is clear that the functions $\bar{\varphi}$ and $\bar{\psi}$ (in place of 
$\varphi $ and $\psi $, respectively) also satisfy (\ref{eqFTA05}).
Therefore, via Lemma \ref{LemFTA01} (ii) applied to $\bar{\varphi},\bar{\psi}%
\in \mathcal{M}_{E}\left( G\right) $, we can conclude that $M_{\bar{\varphi}%
}^{E}T=TM_{\bar{\psi}}^{E}$. This completes the proof. \medskip
\end{proof}

The following result is a special case of Theorem \ref{Thm01}.

\begin{corollary}
Let $E\neq \left\{ 0\right\} $ be any rearrangement invariant B.f.s. over $G$
with o.c.-norm. Suppose that $\varphi ,\psi \in \mathcal{M}_{E}\left(
G\right) $ and $T\in \mathcal{L}\left( E\right) $ satisfy $M_{\varphi
}^{E}T=TM_{\psi }^{E}$. Then $M_{\bar{\varphi}}^{E}T=TM_{\bar{\psi}}^{E}$.
\end{corollary}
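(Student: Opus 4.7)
The plan is to deduce the Corollary as a direct instance of Theorem \ref{Thm01}. Since order continuity of the norm is granted in the hypothesis, the only work is to verify the two geometric invariances (translation and reflection) required by Theorem \ref{Thm01}. Both facts are implicit in the term rearrangement invariant (and were mentioned in passing in the body of the paper), but I would record the short justifications so that nothing is taken on faith.

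First I would check translation invariance. A rearrangement invariant norm depends on $f$ only through its distribution function $\lambda_f(t) = \mu\{x \in G : |f(x)| > t\}$. Because Haar measure on $G$ is translation invariant, $\lambda_{\tau_y f} = \lambda_f$ for every $y \in G$, so $\tau_y f \in E$ with $\|\tau_y f\|_E = \|f\|_E$; this is exactly the condition of Definition \ref{Def1101}. Next I would establish reflection invariance: for $\tilde f(x) = f(-x)$ we have $\{x : |\tilde f(x)| > t\} = -\{x : |f(x)| > t\}$, and the relation $\mu(-A) = \mu(A)$ for all $A \in \mathcal{B}(G)$, recorded at the start of Section~3, gives $\lambda_{\tilde f} = \lambda_f$. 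Hence $\tilde f \in E$ with $\|\tilde f\|_E = \|f\|_E$, which is Definition \ref{Def1102}.

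With both invariances in hand, Theorem \ref{Thm01} applies verbatim to the given $\varphi, \psi \in \mathcal{M}_E(G)$ and $T \in \mathcal{L}(E)$ satisfying $M_\varphi^E T = T M_\psi^E$, yielding $M_{\bar\varphi}^E T = T M_{\bar\psi}^E$. No genuine obstacle arises: the corollary is essentially a relabelling of Theorem \ref{Thm01} under the stronger hypothesis of rearrangement invariance, the only substantive point being the elementary observation that rearrangement invariance on an abelian group entails both translation and reflection invariance.
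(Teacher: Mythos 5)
Your proposal is correct and follows exactly the route the paper intends: the paper simply declares the corollary a special case of Theorem \ref{Thm01}, relying on the facts (stated earlier in the text) that every rearrangement invariant B.f.s.\ over $G$ is both translation invariant and reflection invariant. Your explicit verification of these two invariances via the distribution function and the identity $\mu(-A)=\mu(A)$ merely fills in details the authors left implicit.
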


\noindent \texttt{Ben de Pagter}

\noindent \texttt{Delft Institute of Applied Mathematics,}

\noindent \texttt{Faculty EEMCS,}

\noindent \texttt{Delft University of Technology, }

\noindent \texttt{P.O. Box 5031, 2600 GA Delft, }

\noindent \texttt{The Netherlands.}

\noindent \texttt{b.depagter@tudelft.nl}

\bigskip

\noindent \texttt{Werner J. Ricker}

\noindent \texttt{Math.-Geogr. Fakult\"{a}t}

\noindent \texttt{Katholische Universit\"{a}t Eichst\"{a}tt-Ingolstadt}

\noindent \texttt{D-85072 Eichst\"{a}tt, GERMANY.}

\noindent \texttt{werner.ricker@ku.de}

\end{document}